\newtheorem{theorem}{Theorem}[section]
\newtheorem{proposition}{Proposition}[section]
\newtheorem{definition}{Definition}[section]
\newtheorem{lemma}{Lemma}[section]
\newtheorem{corollary}{Corollary}[section]
\newtheorem{rem}{Remark}[section]
\newtheorem{example}{Example}[section]
\begin{document}

\keywords{Graph automaton group, Schreier graph, Growth, Fractal group, Self-similar representation, Diameter, Graph automorphism.}

\title{Graph automaton groups}

\author{Matteo Cavaleri}
\address{Matteo Cavaleri, Universit\`{a} degli Studi Niccol\`{o} Cusano - Via Don Carlo Gnocchi, 3 00166 Roma, Italia}
\email{matteo.cavaleri@unicusano.it}

\author{Daniele D'Angeli}
\address{Daniele D'Angeli, Universit\`{a} degli Studi Niccol\`{o} Cusano - Via Don Carlo Gnocchi, 3 00166 Roma, Italia}
\email{daniele.dangeli@unicusano.it}

\author{Alfredo Donno}
\address{Alfredo Donno, Universit\`{a} degli Studi Niccol\`{o} Cusano - Via Don Carlo Gnocchi, 3 00166 Roma, Italia}
\email{alfredo.donno@unicusano.it}

\author{Emanuele Rodaro}
\address{Emanuele Rodaro, Politecnico di Milano - Piazza Leonardo da Vinci, 32 20133 Milano, Italia}
\email{emanuele.rodaro@polimi.it}

\begin{abstract}
In this paper we define a way to get a bounded invertible automaton starting from a finite graph. It turns out that the corresponding automaton group is regular weakly branch over its commutator subgroup, contains a free semigroup on two elements and is amenable of exponential growth. We also highlight a connection between our construction and the right-angled Artin groups. We then study the Schreier graphs associated with the self-similar action of these automaton groups on the regular rooted tree. We explicitly determine their diameter and their automorphism group in the case where the initial graph is a path. Moreover, we show that the case of cycles gives rise to Schreier graphs whose automorphism group is isomorphic to the dihedral group. It is remarkable that our construction recovers some classical examples of automaton groups like the Adding machine and the Tangled odometer.
\end{abstract}

\maketitle

\begin{center}
{\footnotesize{\bf Mathematics Subject Classification (2010)}: 20F65, 20F05, 20E08, 05C10, 05C25.}
\end{center}

\section{Introduction}
Algebraic structures can be usually described by means of their combinatorial nature. A typical example is the relation between groups and graphs. In Geometric Group Theory, for instance, many algebraic properties of a group can be detected by investigating the combinatorial properties of the corresponding Cayley graphs. Another typical example of this interaction is achieved by the automaton group theory. Automata (or Mealy machines or transducers) are directed graphs whose transitions describe the action of the states on a finite alphabet. If, for instance, the automaton is complete and invertible, then its states generate a group that has, in many cases, very interesting properties.\\
\indent As an example, in 1980 R. I. Grigorchuk described in \cite{gri80} the first group of intermediate (i.e., faster than polynomial and slower than exponential) growth, that later appeared to be generated by a finite automaton. This group has a number of other interesting properties: for example, it is infinite and finitely generated, but each of its elements has finite order (Burnside group). It was also the first example of an amenable but not elementary amenable group. Over the last decades, a new exciting direction of research focusing on finitely generated automaton groups acting by automorphisms on rooted trees has been developed. It turned out that it has deep connections with the theory of profinite groups and with complex dynamics. The interested reader can refer for more details to the following list of works (and bibliography therein) \cite{fractal_gr_sets, bhn:aut_til, dynamicssubgroup, nekrashevyc}. Recently a special interest has been pointed out for decision problems for automaton groups and semigroups (see \cite{freeness, gillibert, gill, con} and references therein).\\
\indent Beside Cayley graphs the best way to encode the action of automaton groups is given by the structure of the corresponding Schreier graphs. These graphs better represent the length-preserving action of an infinite automaton group on a finite alphabet and constitute an infinite sequence of finite graphs converging to infinite graphs in the Gromov-Hausdorff topology. Finite Schreier graphs can be regarded as orbital graphs with respect to the action of the automaton group on each level of a regular rooted tree, whereas their limits are orbital graphs of the action of the group on the boundary of the tree. Finite Schreier graphs have been studied from a combinatorial point of view in several contexts (e.g., the investigation of models coming from Statistical Mechanics \cite{ising, dimeri}). Classification of infinite Schreier graphs have been studied in several papers (see \cite{intermediate, bound, mio, basilica, infinito, auto} for further discussions about this topic).

In this paper, we introduce a family of automaton groups arising from finite graphs, that we call {\it graph automaton groups} (Definition \ref{maindefi}). More precisely, with any finite graph $G=(V,E)$, we associate an invertible automaton $\mathcal{A}_G$ whose state set is identified with the edge set $E$ and that acts on words over an alphabet identified with the vertex set $V$. Basically, any edge has a nontrivial action only on words starting with a letter identified with one of its endpoints. In this way, we can define an automaton that has the remarkable property of being bounded. The class of bounded automaton groups is very popular and it contains the most famous examples of automaton groups (e.g., the Grigorchuk group and the Basilica group \cite{grizuk}). It is a remarkable fact that all groups generated by bounded automata are amenable, and so all our graph automaton groups have this property. It is interesting that, with our construction, we recover some classical examples of automaton groups (e.g., the Adding machine and the Tangled odometer group, see Example \ref{esempibase}). Among other properties, we prove that our groups are fractal and weakly regular branch over their commutator subgroup (Theorem \ref{teo_principale}), that they all contain (except some trivial cases) a free semigroup (Theorem \ref{teo_esp}) and so they have exponential growth (Corollary \ref{corogrowthexp}). We also highlight a connection between our groups and the right-angled Artin groups (Proposition \ref{artino}).\\
\indent The second part of the paper is devoted to the investigation of the Schreier graphs associated with the action of graph automaton groups. In Theorem \ref{cicli}, we give a rigidity result for the automorphism group of Schreier graphs when the initial graph is cyclic. Then we present an explicit recursive description of the Schreier graphs of graph automaton groups generated by path graphs, and we are able to determine their diameter (Theorem \ref{diametro}) and their automorphism group (Theorem \ref{autogroupfinale}).

\section{Preliminaries}\label{sec: preliminaries}
We recall in this preliminary section some basic definitions and properties about automaton groups and growth.
\begin{definition}
\indent An \textit{automaton} is a quadruple $\mathcal{A} =
(S,X,\lambda,\mu)$, where:
\begin{enumerate}
\item $S$ is the set of states;
\item $X=\{1,2,\ldots, k\}$ is an alphabet;
\item $\lambda: S\times X \rightarrow S$ is the restriction map;
\item $\mu: S\times X \rightarrow X$ is the output map.
\end{enumerate}
\end{definition}
The automaton $\mathcal{A}$ is \textit{finite} if $S$ is finite and it is \textit{invertible} if, for all $s\in
S$, the transformation $\mu(s, \cdot):X\rightarrow X$ is a permutation of $X$. An automaton $\mathcal{A}$ can be visually
represented by its \textit{Moore diagram}: this is a directed labeled graph whose vertices are identified with the states of
$\mathcal{A}$. For every state $s\in S$ and every letter $x\in X$, the diagram has an arrow from $s$ to $\lambda(s,x)$
labeled by $x|\mu(s,x)$. A sink $id$ in $\mathcal{A}$ is a state with the property that $\lambda(id,x)=id$ and $\mu(id,x)=x$ for any $x\in X$.

An important class of automata is given by the so-called bounded automata \cite{sidki}. An automaton is said to be \emph{bounded} if the sequence of numbers of paths of length $n$ avoiding the sink state (along the directed edges of the Moore diagram) is bounded.

For each $n\geq 1$, let $X^n$ denote the set of words of length $n$ over the alphabet $X$ and put $X^0  = \{\emptyset\}$, where $\emptyset$ is the empty word. The action of $\mathcal{A}$ can be easily extended to the infinite set $X^\ast= \bigcup_{n=0}^\infty X^n$ as follows:
\begin{eqnarray}\label{actionextended}
\lambda(s,xw) = \lambda(\lambda(s,x),w) \qquad \qquad \mu(s,xw) = \mu(s,x)\mu(\lambda(s,x),w),
\end{eqnarray}
for every $w\in X^\ast$. For a state $s\in S$, we denote by $\mathcal{A}_s$ the transformation $\mu(s,\cdot)$ on $X^{\ast}$ defined by Eqs. \eqref{actionextended}, which is a bijection if $\mathcal{A}$ is invertible.
Given the invertible automaton $\mathcal{A}$, the \textit{automaton group} generated by $\mathcal{A}$ is by definition the group generated by the transformations $\mathcal{A}_s$, for $s\in S$, and it is denoted $G(\mathcal{A})$. In the rest of the paper, we will often use the notation $s$ instead of $\mathcal{A}_s$. Moreover, the maps $\lambda$ and $\mu$ can be naturally extended to each element of $G(\mathcal{A})$. Notice that the action of $G(\mathcal{A})$ on $X^\ast$ preserves the sets $X^n$, for each $n$.\\
\indent It is a remarkable fact that an automaton group can be regarded in a very natural way as a group of automorphisms of the regular rooted tree of degree $|X|=k$, i.e., the rooted tree $T_k$ in which each vertex has $k$ children, via the identification of the $k^n$ vertices of the $n$-th level of $T_k$ with the set $X^n$.\\
\indent The group $G(\mathcal{A})$  is said to be \textit{spherically transitive} if its action is transitive on $X^n$, for any $n$. Let $g\in G(\mathcal{A})$. The action of $g$ on $X^\ast$ can be factorized by considering the action on $X$ and $|X|$ restrictions as follows. Let $Sym(k)$ be the symmetric group on $k$ elements. Then an element $g\in G(\mathcal{A})$ can be represented as
\begin{eqnarray}\label{ssd}
(g_1,\ldots, g_{k})\sigma,
\end{eqnarray}
where $g_i:=\lambda(g,i)\in G(\mathcal{A})$ and $\sigma\in Sym(k)$ describes the action of $g$ on $X$. We say that Eq. \eqref{ssd} is the \emph{self-similar representation} of $g$. Notice that, given $g=(g_1,\ldots, g_{k})\sigma$ and $h=(h_1,\ldots, h_{k})\tau$, the self-similar representation of $gh$ is
$$
gh=(g_1h_{\sigma(1)}, \ldots, g_{k}h_{\sigma(k)})\sigma\tau.
$$
In the tree interpretation of Eq. \eqref{ssd}, the permutation $\sigma$ corresponds to the action of $g$ on the first level of $T_k$, and the automorphism $g_i$ is the restriction of the action of $g$ to the subtree (isomorphic to $T_k$) rooted at the $i$-th vertex of the first level.

Let us denote by $Stab_{G(\mathcal{A})}(X) = \{g\in G(\mathcal{A}): g(x) = x, \ \forall \ x\in X\}$ the stabilizer of $X$, that is, the subgroup of $G(\mathcal{A})$ consisting of all elements acting trivially on $X$. In particular, if $g\in Stab_{G(\mathcal{A})}(X)$, then one can identify $g$ with the $k$-tuple $(g_1,\ldots, g_{k})$, where $g_i=\lambda(g,i)$.

\begin{lemma}\label{lemrest}
Let $g=(g_1,\ldots, g_k) \in Stab_{G(\mathcal{A})}(X)$. If $g_i \in \{g,id\}$ for each $i=1,\ldots, k$, then $g$ is the trivial element of $G(\mathcal{A})$.
\end{lemma}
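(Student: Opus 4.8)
The plan is to prove, by induction on $n \geq 0$, that $g$ fixes every word in $X^n$. Since $G(\mathcal{A})$ is realized as a group of automorphisms of the rooted tree $T_k$ and therefore acts faithfully on $X^\ast$, establishing this for all $n$ immediately forces $g = id$ in $G(\mathcal{A})$.

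The base cases $n=0$ and $n=1$ are immediate: $g$ fixes the empty word, and $g \in Stab_{G(\mathcal{A})}(X)$ fixes each letter of $X$ by hypothesis. For the inductive step, suppose $g$ fixes $X^n$ pointwise and take an arbitrary word $xv \in X^{n+1}$ with $x \in X$ and $v \in X^n$. Because $g$ stabilizes $X$, its self-similar representation is $g = (g_1,\ldots,g_k)$ with trivial root permutation, so the extension formulas \eqref{actionextended} give $g(xv) = x\, g_x(v)$, where $g_x = \lambda(g,x)$. By hypothesis $g_x \in \{g, id\}$: if $g_x = id$ then $g_x(v) = v$ trivially, while if $g_x = g$ then $g_x(v) = g(v) = v$ by the inductive hypothesis. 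In either case $g(xv) = xv$, so $g$ fixes $X^{n+1}$ pointwise, which completes the induction.

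There is essentially no serious obstacle here: the statement is a direct consequence of the recursive structure of the self-similar action together with its faithfulness on $X^\ast$. The only point that needs a little care is to run the induction uniformly over the two possibilities $g_x = id$ and $g_x = g$ for each restriction, which is exactly what the step above does; it is also essential that $g$ itself lies in $Stab_{G(\mathcal{A})}(X)$, so that the self-similar representation of $g$ carries the trivial permutation and the identity $g(xv) = x\, g_x(v)$ is available.
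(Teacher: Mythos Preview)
Your proof is correct and follows essentially the same approach as the paper's own proof: both argue by induction on the length $n$ of words in $X^n$, using that $g(xw') = x\,g_x(w')$ together with the hypothesis $g_x \in \{g, id\}$ to reduce to the inductive hypothesis. Your write-up is slightly more explicit about faithfulness and includes the trivial $n=0$ case, but the argument is the same.
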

\begin{proof}
Let $w\in X^n$. We will prove by induction on $n$ that $g(w) = w$. The case $n=1$ is obvious since $g \in Stab_{G(\mathcal{A})}(X)$. Now let $w = xw'$, with $x\in X$ and $w'\in X^{n-1}$. Then one has $g(xw') = x g_x(w')$. If $g_x=id$, the claim easily follows. If $g_x=g$, the inductive hypothesis gives $g_x(w') = w'$ and so one gets $g(w)=w$.
\end{proof}
Given two groups $G_1$ and $G_2$, the direct product of $G_1$ and $G_2$ will be denoted by $G_1 \times G_2$. In particular, we denote by $G^k$ the $k$-times iterated direct product of a group $G$ with itself. The following definitions are given in the literature for the broader class of self-similar groups (see \cite{nekrashevyc}). Here we restrict our interest to automaton groups.

\begin{definition}
Let $G(\mathcal{A})$ be an automaton group. Then $G(\mathcal{A})$ is said to be
\begin{enumerate}
\item \emph{fractal}, if the map $\psi: Stab_{G(\mathcal{A})}(X)\to G(\mathcal{A})^k$ given by $g\mapsto (g_1,\ldots, g_{k})$ is surjective on each factor.
\item \emph{weakly regular branch} over its subgroup $N$, if $N^k\subset\psi(N\cap Stab_{G(\mathcal{A})}(X))$, where $N$ is supposed to be nontrivial.
\end{enumerate}
\end{definition}
 Let $\mathcal{G}$ be a finitely generated group, with symmetric generating set $S=S^{-1}$. The \textit{length} of $g$ with respect to $S$ is denoted by $|g|_S$ (we will omit the subscript $S$ when the generating set is fixed) and it is defined as the minimal length of a word in $S^\ast$ representing the element $g$. Then one puts $\gamma_S(n)= \# \{g\in \mathcal{G} : |g|_S \leq n\}$. This defines the growth function
$$
\gamma_S:\mathbb{N}\to \mathbb{N}.
$$
This map clearly depends on the generating set $S$. However, one can prove that changing the generating set does not affect the asymptotic properties of $\gamma_S$. In particular, the growth rate
$$
\lambda_{\mathcal{G}}=\lim_{n\to\infty} \sqrt[n]{\gamma_S(n)}
$$
is greater than or equal to $1$ independently on the particular generating set (see, for instance, \cite{pierre}).
\begin{definition}
A finitely generated group $\mathcal{G}$ has \emph{exponential growth} (resp. \emph{sub-exponential growth}) if $\lambda_{\mathcal{G}}>1$ (resp. $\lambda_{\mathcal{G}}=1$).
\end{definition}

\section{Automata groups from graphs}
We present in this section the main construction of the paper, that is, we are going  to associate an invertible automaton with a given finite graph.\\
\indent Let $G=(V,E)$ be a finite graph. Let $V=\{x_1,\ldots, x_k\}$ (we will often use also the notation $\{1,2,\ldots, k\}$) be its vertex set and let $E$ be its edge set.\\
\indent First of all, we choose an orientation for the edges of $G$. Let $E'$ be the set of edges, where an orientation of each edge has been chosen. Notice that elements in $E$ are unordered pairs of type $\{x_i,x_j\}$, whereas elements in $E'$ are ordered pairs of type $(x_i,x_j)$, meaning that the edge has been oriented from the vertex $x_i$ to the vertex $x_j$.\\
\indent We then define an automaton $\mathcal{A}_G=(E' \cup \{id\}, V, \lambda, \mu)$ such that:
\begin{itemize}
\item $E' \cup \{id\}$ is the set of states;
\item $V$ is the alphabet;
\item $\lambda: E'\times V\to E'$ is such that, for each $e=(x,y)\in E'$, one has
$$
\lambda (e,z) = \left\{
                  \begin{array}{ll}
                    e & \hbox{if } z=x \\
                    id & \hbox{if } z\neq x;
                  \end{array}
                \right.
$$
\item $\mu: E'\times V\to V$ is such that, for each $e=(x,y)\in E'$, one has
$$
\mu (e,z) = \left\{
                  \begin{array}{ll}
                    y & \hbox{if } z=x \\
                    x & \hbox{if } z=y \\
                    z & \hbox{if } z\neq x,y.
                  \end{array}
                \right.
$$
\end{itemize}
In other words, any directed edge $e=(x,y)$ represents a state of the automaton $\mathcal{A}_G$ and it has just one restriction to itself (given by $\lambda(e,x)$) and all other restrictions to the sink $id$. Its action is nontrivial only on the letters $x$ and $y$, which are switched since $\mu(e,x)=y$ and $\mu(e,y)=x$. It is easy to check that
$\mathcal{A}_G$ is invertible for any $G$ and any choice of the orientation of the edges. This makes us able to define an associated automaton group.

\begin{definition}\label{maindefi}
The graph automaton group $\mathcal{G}_G$ is the automaton group generated by $\mathcal{A}_G$.
\end{definition}

\begin{rem}\rm
Notice that any loop in $G$ gives rise to the trivial element of $\mathcal{G}_G$. Moreover, any multiedge produces a set of equal generators (up to consider the inverse).
\end{rem}

The previous remark basically says that we can just consider simple graphs. We make from now on this assumption. The following proposition shows that the graph automaton group $\mathcal{G}_G$ does not depend on the particular orientation of the edges in $G$ nor on the order of the vertices in $V$.

\begin{proposition}\label{lemma_iso}
The group $\mathcal{G}_G$ does not depend on the choice of the edge orientation. Moreover, a rearrangement of the vertices in $V$ produces groups that are isomorphic.
\end{proposition}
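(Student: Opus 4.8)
The plan is to handle the two claims separately, in each case exhibiting an explicit isomorphism between the two automaton groups in question and checking it respects the self-similar structure. For the first claim, fix a graph $G$ and two orientations $E'$ and $E''$ of its edges. These differ by reversing a subset of edges, so it suffices to treat the case where $E''$ is obtained from $E'$ by reversing a single edge $e=(x,y)$, replaced by $\bar e=(y,x)$; the general case then follows by composing such moves (or by doing all reversals at once, which is no harder). Denote by $\mathcal{A}_G'$ and $\mathcal{A}_G''$ the two automata and by $\mathcal{G}_G'$, $\mathcal{G}_G''$ the corresponding groups, both acting on $V^\ast$. The key observation is that the generator of $\mathcal{G}_G'$ attached to $e=(x,y)$ and the generator of $\mathcal{G}_G''$ attached to $\bar e=(y,x)$ are in fact \emph{the same} transformation of $V^\ast$: one checks directly from the definitions of $\lambda$ and $\mu$ that $e$ acts as the transposition of $x$ and $y$ on the first letter, with restriction to itself at the unique letter dictating nontriviality; I will verify by induction on word length (mimicking the argument in Lemma \ref{lemrest}) that both states induce exactly the map $w\mapsto$ ``swap the leading block of $x$'s and $y$'s'' — more precisely the involution fixed by the relation $e = (e, id, \ldots, id)(x\,y)$ in the $x$-coordinate and $\bar e = (id,\ldots, id, \bar e, \ldots)(x\,y)$ in the $y$-coordinate, which over $V^\ast$ coincide. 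Granting this, the identity map on $V^\ast$ carries the generating set of $\mathcal{G}_G'$ bijectively onto that of $\mathcal{G}_G''$, hence $\mathcal{G}_G' = \mathcal{G}_G''$ as subgroups of $\mathrm{Sym}(V^\ast)$.

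For the second claim, let $\pi\in \mathrm{Sym}(V)$ be a permutation of the vertex set; write $G^\pi$ for the graph with edge set $\{\{\pi(x),\pi(y)\} : \{x,y\}\in E\}$ (with orientations transported along $\pi$). The permutation $\pi$ induces an automorphism $\Pi$ of the tree $V^\ast$, acting letter-by-letter: $\Pi(z_1 z_2\cdots z_n) = \pi(z_1)\pi(z_2)\cdots\pi(z_n)$. I claim conjugation by $\Pi$ sends $\mathcal{G}_G$ isomorphically onto $\mathcal{G}_{G^\pi}$. Concretely, if $e=(x,y)$ is a generator of $\mathcal{G}_G$, one computes that $\Pi \circ \mathcal{A}_e \circ \Pi^{-1}$ is precisely the generator $\pi(e) := (\pi(x),\pi(y))$ of $\mathcal{G}_{G^\pi}$: the output map of $e$ swaps $x,y$ and fixes everything else, so after pre- and post-composing with the letterwise relabelling $\pi$ it swaps $\pi(x),\pi(y)$ and fixes everything else, and the restriction structure transports the same way. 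This can be made rigorous by the same short induction on word length, or more slickly by noting that $\Pi$ is the tree automorphism $(\Pi,\ldots,\Pi)\pi$ and using the multiplication rule for self-similar representations recalled in the Preliminaries to push the conjugation through one level at a time. Since $g\mapsto \Pi g \Pi^{-1}$ is an injective homomorphism $\mathrm{Sym}(V^\ast)\to \mathrm{Sym}(V^\ast)$ carrying generators to generators, it restricts to an isomorphism $\mathcal{G}_G \xrightarrow{\sim} \mathcal{G}_{G^\pi}$.

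The only genuinely delicate point is the base induction establishing that a state of the automaton induces the transformation I described on all of $V^\ast$, i.e.\ that the formal self-similar identities for $e$ and for $\bar e$ (resp.\ for $e$ and for $\pi(e)$) really do determine the same element of $\mathrm{Sym}(V^\ast)$. This is where one must be careful about the direction of the chosen orientation, because $\lambda(e,\cdot)$ is nontrivial only at the \emph{tail} $x$ of $e$, so the two states ``look different'' at the level of Moore diagrams even though they act identically; Lemma \ref{lemrest} is not quite enough on its own, but the same inductive bookkeeping it uses applies. Once that lemma-style induction is in place, everything else is routine relabelling, and I expect the write-up to be short.
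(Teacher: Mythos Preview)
Your treatment of the second claim (vertex relabeling via conjugation by the letter-wise extension $\Pi$ of $\pi$) is correct and is exactly the paper's argument, just spelled out in more detail.

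The first claim, however, contains a genuine error. The generators $e=(x,y)$ and $\bar e=(y,x)$ are \emph{not} the same transformation of $V^\ast$, and neither of them is the involution ``swap the leading $\{x,y\}$-block''. Concretely, with $e=(e,id,\ldots)(x\,y)$ one has $e(yy)=x\cdot id(y)=xy$, not $xx$; and $e(xy)=y\cdot e(y)=yx$ while $\bar e(xy)=y\cdot id(y)=yy$. In fact $e$ acts as the binary odometer on the $\{x,y\}$-subtree (which has infinite order), so the induction you flag as ``the only genuinely delicate point'' would fail immediately at length $2$.

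What is true---and what the paper proves---is that $\bar e = e^{-1}$. The paper computes directly that $e\bar e$ has self-similar representation $(e\bar e, id,\ldots,id)$ and then invokes Lemma~\ref{lemrest} to conclude $e\bar e=id$. Since reversing an edge replaces a generator by its inverse, the symmetric generating sets coincide and the two groups are literally equal as subgroups of $\mathrm{Sym}(V^\ast)$. Your overall conclusion (equality of the groups) is right, but the mechanism is inversion, not equality of generators; once you make that correction the argument is as short as the paper's and Lemma~\ref{lemrest} is exactly the tool you need.
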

\begin{proof}
Without loss of generality, we can suppose that there exists an edge connecting the vertices $x_1$ and $x_2$. If we choose the orientation from $x_1$ to $x_2$, then the self-similar representation of the corresponding edge $e$ as a generator of $\mathcal{G}_G$ is
$$
e=(e,id,\ldots, id)(1,2).
$$
On the other hand, if the opposite orientation is chosen, the self-similar representation of the corresponding edge $f$ becomes
$$
f=(id,f,\ldots, id)(1,2).
$$
A direct computation gives $ef=(ef, id, \ldots, id)$, so that $ef=id$ by Lemma \ref{lemrest}, and so $f=e^{-1}$. Therefore, the choice of the orientation does not affect the structure of the group $\mathcal{G}_G$.\\
\indent In order to prove the second claim, it is enough to observe that changing the name of the vertices is equivalent to act by a permutation $\sigma$ on the alphabet $V$. In this case, we just get the group $\mathcal{G}_G^{\sigma}$, which is obtained from $\mathcal{G}_G$ via conjugation by $\sigma$.
\end{proof}
In the light of Proposition \ref{lemma_iso}, given an oriented edge $e = (x_i,x_j)\in E'$, we can denote by $e^{-1}=(x_j,x_i)$ the edge with the opposite orientation, keeping in mind that the choice of the opposite orientation corresponds to take the inverse in $\mathcal{G}_G$.
\begin{rem}  \rm
For any graph $G$ the automaton $\mathcal{A}_G$ is bounded. In fact, it is easy to check that the Moore diagram of $\mathcal{A}_G$, regarded as a simple graph, is a star graph with one internal vertex corresponding to the sink $id$ and $|E|$ leaves corresponding to the directed edges of $G$. Each leaf has exactly one directed loop and all other transitions go to the sink. In particular, for any edge of $G$, the Moore diagram contains exactly one cycle of fixed length avoiding the sink and disconnected from the other cycles. As a consequence, the group $\mathcal{G}_G$ does not contain free non abelian subgroups, as follows from Sidki's theorem \cite{sidki}. Moreover, since all groups generated by bounded automata are amenable \cite{amenability bounded}, then each graph automaton group $\mathcal{G}_G$ is amenable. We do not provide a special description of \emph{F\o lner sets} of $\mathcal{G}_G$. The matter certainly deserves further investigation in the future. Notice that, having $\mathcal{G}_G$ \emph{solvable world problem}, a sequence of F\o lner sets must be computable (in the sense of \cite{cava1,cava2}).
\end{rem}
The following result shows that taking subgraphs in $G$ corresponds to obtain subgroups in $\mathcal{G}_G$. By a subgraph of $G$, here we mean a subset of its edges, together with the subset of vertices of $V$ consisting of their endpoints.

\begin{proposition}\label{prop_subgroup}
Let $H=(V_H,E_H)$ be a graph isomorphic to a subgraph of $G$. Then $\mathcal{G}_H \leq \mathcal{G}_G$.
\end{proposition}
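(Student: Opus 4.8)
The plan is to prove the statement by exhibiting an explicit injective homomorphism $\mathcal{G}_H \to \mathcal{G}_G$ coming from the inclusion of edge sets. Fix an isomorphism of $H$ onto a subgraph $H'$ of $G$; composing with the identification $E_{H'}\subseteq E$, we get an injection $\iota$ from the edge set of $H$ into the edge set of $G$. Since, by Proposition \ref{lemma_iso}, neither group depends on the chosen orientations nor on the labelling of the vertices, we may assume the orientations and vertex names are compatible, so that for every edge $e=(x,y)$ of $H$ the image $\iota(e)$ is the edge $(x,y)$ of $G$, now acting on the larger alphabet $V\supseteq V_H$. The key observation is that the self-similar recursion defining $\iota(e)$ as a generator of $\mathcal{G}_G$ restricts, on the sub-alphabet $V_H$, exactly to the recursion defining $e$ in $\mathcal{G}_H$: on letters of $V_H$ the maps $\lambda$ and $\mu$ attached to $e$ in $\mathcal{A}_G$ agree with those in $\mathcal{A}_H$, and on the extra letters $V\setminus V_H$ the generator $\iota(e)$ acts trivially and sends its restriction to $id$. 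Hence the subtree $V_H^\ast\subseteq V^\ast$ is invariant under the subgroup $\langle \iota(e): e\in E_H\rangle\leq \mathcal{G}_G$, and the action there is precisely the action of $\mathcal{G}_H$.

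Concretely, I would first set up the map on generators, $e\mapsto \iota(e)$, and check it extends to a group homomorphism $\varphi:\mathcal{G}_H\to\mathcal{G}_G$: this is automatic once one verifies that every relation holding among the $e$'s in $\mathcal{G}_H$ also holds among the $\iota(e)$'s in $\mathcal{G}_G$, which follows from the compatibility of the self-similar representations described above (formally, one can argue that the restriction-to-$V_H^\ast$ map is a well-defined group homomorphism $\langle\iota(E_H)\rangle\to\mathrm{Sym}(V_H^\ast)$ whose image is $\mathcal{G}_H$, so $\varphi$ is the inverse of this restriction followed by inclusion). Then injectivity of $\varphi$ is what needs a genuine argument: suppose $w$ is a word in the generators of $\mathcal{G}_H$ with $\varphi(w)=id$ in $\mathcal{G}_G$. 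I would argue that $\varphi(w)$ acts trivially on all of $V^\ast$, in particular on $V_H^\ast$; but on $V_H^\ast$ the automorphism $\varphi(w)$ coincides with the action of $w$ as an element of $\mathcal{G}_H$, so $w=id$ in $\mathcal{G}_H$. This last coincidence is exactly the claim that the "restriction to $V_H^\ast$" is compatible with products, which one checks by induction on word length using the product formula for self-similar representations, noting that whenever a restriction $\lambda(\iota(e),z)$ for $z\in V_H$ equals $id$ in $\mathcal{A}_G$, the corresponding restriction $\lambda(e,z)$ in $\mathcal{A}_H$ equals $id$ as well, and whenever it equals $\iota(e)$ the other equals $e$.

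The main obstacle, and the place where care is required, is making the phrase "the action on $V_H^\ast$ agrees" into a rigorous statement at the level of \emph{group elements} rather than just generators: a priori an element of $\mathcal{G}_G$ is an equivalence class of words, and one must be sure that evaluating such a word on letters of $V_H$ never "escapes" into $V\setminus V_H$. This is guaranteed by the structure of $\mathcal{A}_G$: a generator $\iota(e)=(x,y)$ either fixes a letter $z\in V_H$ (when $z\neq x,y$, and then sends its restriction to $id$) or swaps $x\leftrightarrow y$, both of which lie in $V_H$ since $x,y$ are endpoints of an edge of $H$; and $id$ obviously preserves $V_H^\ast$. So $V_H^\ast$ is genuinely invariant, and the restriction map is a homomorphism onto $\mathcal{G}_H$. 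Once this is in place the injectivity of $\varphi$ is immediate, and the proposition follows; as a byproduct one also recovers an independent proof that $\mathcal{A}_H$ is bounded, consistent with the earlier remark. A short, clean way to present all of this is to phrase it as: "the map $\mathcal{G}_H \to \mathcal{G}_G$ induced on generators by $E_H\hookrightarrow E$ admits the restriction-to-$V_H^\ast$ map as a left inverse," which simultaneously gives well-definedness and injectivity.
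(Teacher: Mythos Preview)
Your proposal is correct and follows essentially the same route as the paper: both arguments identify $\mathcal{G}_H$ with the subgroup $\langle\iota(E_H)\rangle\le\mathcal{G}_G$ by observing that the self-similar representations of the $\iota(e)$'s agree with those of the $e$'s on the sub-alphabet $V_H$ and are trivial (with sink restriction) on $V\setminus V_H$, so that a word in the generators is trivial in one group iff it is trivial in the other. One small point of emphasis: you label injectivity of $\varphi$ as the step needing a ``genuine argument'', but that direction (trivial on $V^\ast\Rightarrow$ trivial on $V_H^\ast$) is immediate from restriction; the direction that actually uses your observation about $V\setminus V_H$ is the \emph{well-definedness} of $\varphi$ (equivalently, injectivity of your restriction map $\rho$), since one must check that a word acting trivially on $V_H^\ast$ also acts trivially on all of $V^\ast$---which does follow from your remark that every generator fixes letters outside $V_H$ with restriction $id$.
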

\begin{proof}
Let $e_1,\ldots, e_{\ell}$ be the edges of $G=(V,E)$ belonging to the subgraph $\widetilde{G}$ isomorphic to $H$. Consider the subgroup $ \widetilde{\mathcal{G}}\leq\mathcal{G}_G $ generated by $e_1, \ldots, e_{\ell}$. We claim that the groups $\widetilde{\mathcal{G}}$ and $\mathcal{G}_H$ are isomorphic. Let $\psi$ be the isomorphism between $\widetilde{G}$ and $H$. By Proposition \ref{lemma_iso} we can suppose, without loss of generality, that $e_1,\ldots, e_{\ell}$ are edges of $G$, whose endpoints are the first $t$ vertices of $V$, and we can consider in $V_H$ the order induced by $\psi$. If we focus on the self-similar representation of the automorphisms generating $\mathcal{G}_H$, this is exactly the self-similar representation of the $e_i$'s, regarded as generators of $\mathcal{G}_G$, where the last $|V|-t$ restrictions are erased. Therefore, it is clear that a relation in $ \widetilde{\mathcal{G}}$ holds if and only if the same relation in $ \mathcal{G}_H$ holds.
\end{proof}

\begin{proposition}\label{proppp}
Let $G=(V,E)$ be the disjoint union of the graphs $G_1=(V_1,E_1)$, $\ldots$, $G_t=(V_t,E_t)$. Then $ \mathcal{G}_G = \mathcal{G}_{G_1}\times \cdots\times \mathcal{G}_{G_t}$.
\end{proposition}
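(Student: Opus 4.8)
The plan is to realize $\mathcal{G}_G$ as an internal direct product of the subgroups generated by the edges of the various components. Write $V=V_1\sqcup\cdots\sqcup V_t$ and $E=E_1\sqcup\cdots\sqcup E_t$ according to the decomposition of $G$. First I would invoke Proposition~\ref{prop_subgroup}: since each $G_i$ is (isomorphic to) a subgraph of $G$, the subgroup $\langle E_i\rangle\leq\mathcal{G}_G$ generated by the oriented edges of $G_i$ is isomorphic to $\mathcal{G}_{G_i}$, and I identify the two from now on.

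The crucial observation is that the subgroups $\mathcal{G}_{G_i}$ act on disjoint parts of the tree $V^\ast$. For $i=1,\dots,t$ put
\[
Y_i=\{\,w\in V^\ast\setminus\{\emptyset\}\ :\ \text{the first letter of }w\text{ belongs to }V_i\,\},
\]
so that $V^\ast=\{\emptyset\}\sqcup Y_1\sqcup\cdots\sqcup Y_t$. I would then establish the stability claim: every generator $e=(x,y)\in E_i$ maps $Y_i$ onto itself and fixes pointwise every word in $\{\emptyset\}\cup\bigcup_{j\neq i}Y_j$. Indeed, from the definitions of $\lambda$ and $\mu$, if $z\notin\{x,y\}$ --- in particular if $z\notin V_i$ --- then $e(zw')=zw'$, while if $z\in\{x,y\}\subseteq V_i$ then $e(zw')$ again begins with a letter of $V_i$; a short induction on the length of $w$ using Eqs.~\eqref{actionextended} promotes this to all of $V^\ast$, and then passing to products and inverses yields the claim for every element of $\mathcal{G}_{G_i}=\langle E_i\rangle$.

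Two consequences follow immediately. First, for $i\neq j$ an element of $\mathcal{G}_{G_i}$ moves only words in $Y_i$ while an element of $\mathcal{G}_{G_j}$ moves only words in $Y_j$; as $Y_i\cap Y_j=\emptyset$, these permutations of $V^\ast$ have disjoint support, hence commute, so the subgroups $\mathcal{G}_{G_1},\dots,\mathcal{G}_{G_t}$ pairwise commute in $\mathcal{G}_G$. Consequently the multiplication map $\Phi\colon\mathcal{G}_{G_1}\times\cdots\times\mathcal{G}_{G_t}\to\mathcal{G}_G$, $(g_1,\dots,g_t)\mapsto g_1\cdots g_t$, is a group homomorphism, and it is surjective because every generator of $\mathcal{G}_G$ belongs to some $E_i$. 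Second, for injectivity, suppose $g_1\cdots g_t=id$ and fix an index $i$: on a word $w\in Y_i$ each factor $g_j$ with $j\neq i$ acts trivially while $g_i$ keeps the word inside $Y_i$, so --- there being exactly one factor from each component --- the product restricted to $Y_i$ coincides with $g_i$ restricted to $Y_i$; hence $g_i$ fixes $Y_i$ pointwise, and by the stability claim it fixes everything else too, so $g_i=id$. Thus $\Phi$ is an isomorphism exhibiting $\mathcal{G}_G$ as the internal direct product $\mathcal{G}_{G_1}\times\cdots\times\mathcal{G}_{G_t}$, which is precisely the asserted equality.

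I do not anticipate a serious obstacle here: once the partition $V^\ast=\{\emptyset\}\sqcup Y_1\sqcup\cdots\sqcup Y_t$ is recognized as being preserved in the appropriate way by each $\mathcal{G}_{G_i}$, both the commutation and the triviality of $\ker\Phi$ drop out of the disjoint-support principle. The only points that need a little care are verifying that the stability claim genuinely survives the recursive definition of the action (a routine induction on word length) and the interspersing step in the injectivity argument --- spelling out that a product with one factor from each component, evaluated on a word of $Y_i$, collapses to the action of the $i$-th factor alone.
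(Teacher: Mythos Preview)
Your proof is correct and rests on exactly the same observation the paper uses: the generators in $E_i$ act trivially on letters outside $V_i$, so the subgroups $\langle E_i\rangle$ have disjoint support on $V^\ast$. The paper dispatches the proposition in a single sentence with this remark, whereas you spell out the internal direct product argument in full (invoking Proposition~\ref{prop_subgroup} for the identification $\langle E_i\rangle\cong\mathcal{G}_{G_i}$, verifying commutation, surjectivity, and injectivity of the multiplication map); but the underlying idea is the same.
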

\begin{proof}
The claim easily follows by observing that the generators in $E_i$ act trivially on the set of vertices $V\setminus V_i$.
\end{proof}

By virtue of Proposition \ref{proppp}, we can suppose $G$ to be connected. Therefore, from now on, we assume $G=(V,E)$ to be simple and connected.

\begin{example} \label{esempibase}    \rm
\begin{enumerate}
\item If $G$ is the path graph $P_2$ on $2$ vertices, the associated automaton $\mathcal{A}_{P_2}$ is represented in Fig. \ref{automatonP2}.
\begin{figure}[h]
\begin{center}
\psfrag{a}{$a$}\psfrag{1}{$1$}\psfrag{2}{$2$}\psfrag{id}{$id$}\psfrag{1|1,2|2}{$1|1, 2|2$}\psfrag{1|2}{$1|2$}\psfrag{2|1}{$2|1$}
\includegraphics[width=0.55\textwidth]{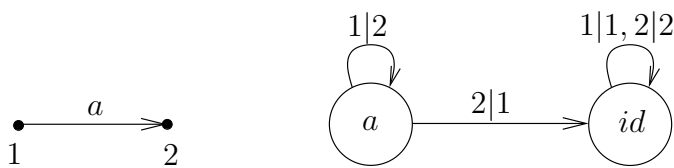}
\end{center}\caption{The path $P_2$ and the associated automaton $\mathcal{A}_{P_2}$.} \label{automatonP2}
\end{figure}
The automaton $\mathcal{A}_{P_2}$ is the so-called \emph{Adding machine} (see, e.g., \cite{fractal_gr_sets}) and it generates the group $\mathbb{Z}$. The self-similar representation of its generator is
$$
a=(a,id)(1,2).
$$
\item If $G$ is the path graph $P_3$ on $3$ vertices, the associated automaton $\mathcal{A}_{P_3}$ is represented in Fig. \ref{automatonP3}.
\begin{figure}[h]
\begin{center}
\psfrag{a}{$a$}\psfrag{b}{$b$}\psfrag{1}{$1$}\psfrag{2}{$2$}\psfrag{3}{$3$}\psfrag{id}{$id$}\psfrag{1|2}{$1|2$}\psfrag{2|3}{$2|3$}\psfrag{2|1,3|3}{$2|1, 3|3$}
\psfrag{1|1,3|2}{$1|1, 3|2$}\psfrag{1|1,2|2,3|3}{$1|1, 2|2, 3|3$}
\includegraphics[width=0.55\textwidth]{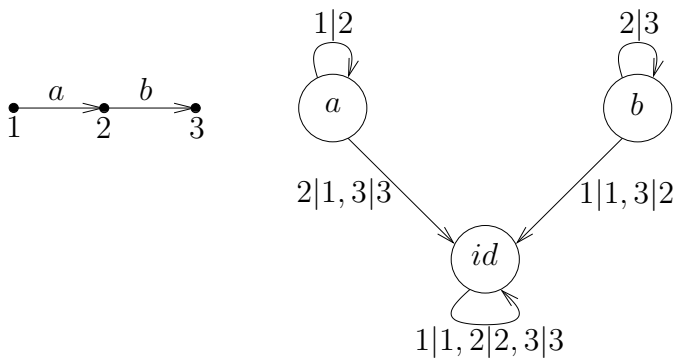}
\end{center}\caption{The path $P_3$ and the associated automaton $\mathcal{A}_{P_3}$.} \label{automatonP3}
\end{figure}
It corresponds to the so-called \emph{Tangled odometer} (see, e.g., \cite{spettro}). The two generators of the group $\mathcal{G}_{P_3}$ have the self-similar representation:
$$
a=(a,id,id)(1,2) \qquad b=(id,b,id)(2,3).
$$
\item If $G$ is the cyclic graph $C_3$ on $3$ vertices, the associated automaton $\mathcal{A}_{C_3}$ is represented in Fig. \ref{automatontri}.
\begin{figure}[h]
\begin{center}
\psfrag{a}{$a$}\psfrag{b}{$b$}\psfrag{c}{$c$}\psfrag{1}{$1$}\psfrag{2}{$2$}\psfrag{3}{$3$}\psfrag{id}{$id$}\psfrag{1|2}{$1|2$}\psfrag{2|3}{$2|3$}\psfrag{2|1,3|3}{$2|1, 3|3$}
\psfrag{1|1,3|2}{$1|1, 3|2$}\psfrag{1|1,2|2,3|3}{$1|1, 2|2, 3|3$}\psfrag{1|3,2|2}{$1|3, 2|2$}\psfrag{3|1}{$3|1$}
\includegraphics[width=0.6\textwidth]{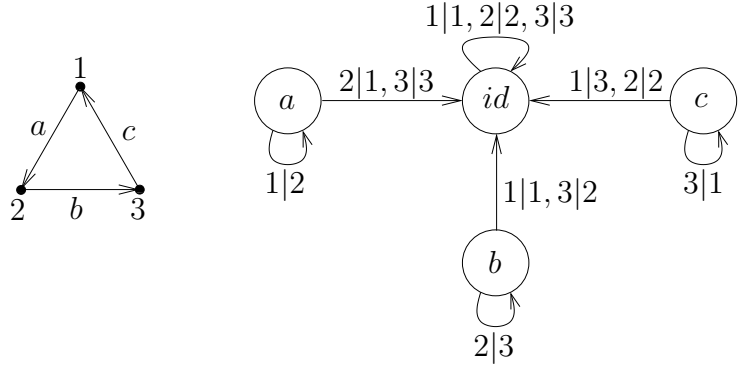}
\end{center}\caption{The cycle $C_3$ and the associated automaton $\mathcal{A}_{C_3}$.} \label{automatontri}
\end{figure}
The automaton $\mathcal{A}_{C_3}$ generates a group that we call the \emph{uncle of the Hanoi Towers group} on three pegs (see, e.g., \cite{hanoi}), and the three generators of the group $\mathcal{G}_{C_3}$ have the self-similar representation:
$$
a=(a,id,id)(1,2) \qquad  b=(id,b,id)(2,3) \qquad  c=(id,id,c)(3,1).
$$
Observe that the automorphisms $A:=abc$, $B:=bca$, and $C:=cab$ generate the classical Hanoi Towers group on three pegs, that is therefore a subgroup of its uncle.
\end{enumerate}
\end{example}

Notice that if $e,f\in E'$ do not share any vertex, then $[e,f]=e^{-1}f^{-1}ef=id$ in $\mathcal{G}_G$, since their actions are nontrivial on disjoint subsets of $V$. The next theorem collects some more algebraic properties shared by (almost) all graph automaton groups. \\
\indent A directed path of length $t$ from the vertex $v$ to the vertex $w$ in a graph $G=(V,E)$ is a sequence of vertices $v_{i_0}=v, v_{i_1}, v_{i_2}, \ldots, v_{i_t}=w$ in which all edges $e_1, e_2, \ldots, e_t$ are oriented in the direction from $v$ to $w$, that is, one has $e_1=(v,v_{i_1}), e_2=(v_{i_1},v_{i_2}), \ldots, e_t=(v_{i_{t-1}},w)$. A directed cycle of length $t$ is a directed path such that $v_{i_0}=v_{i_t}$.

\begin{theorem}\label{teo_principale}
Let $G=(V,E)$ be a graph such that $|E|\geq 2$. Then the following properties hold.
\begin{enumerate}
\item $\mathcal{G}_G$ is fractal.
\item $\mathcal{G}_G$ contains an element of finite order and is non-abelian.
\item If $G$ contains a cycle $e_1 ,\ldots, e_t$, then $(e_1^{\varepsilon_1}\cdots e_t^{\varepsilon_t})^{t-1}$, with $\varepsilon_i\in \{\pm 1\}$, is a relation in $\mathcal{G}_G$ whenever $e_1^{\varepsilon_1},\ldots, e_t^{\varepsilon_t}$ is a directed cycle in $G$.
\item $\mathcal{G}_G$ is weakly regular branch over its commutator subgroup $ \mathcal{G}_G'$.
\end{enumerate}
\end{theorem}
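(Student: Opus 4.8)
The plan is to establish the four items in turn; item (4) will rely on the fractality of (1) and on the non-triviality of $\mathcal{G}_G'$ coming from (2). Throughout I use that an oriented edge $e=(x,y)\in E'$ has self-similar representation $e=(\dots,e,\dots)(x\ y)$, with $e$ in the coordinate indexed by its tail $x$, $id$ in all other coordinates, and $(x\ y)$ the transposition of $x$ and $y$; hence $e^{2}\in Stab_{\mathcal{G}_G}(X)$ has $e$ in the two coordinates indexed by the endpoints of $e$ and $id$ elsewhere. For (1), fix a vertex $i$ and set $H_i=\{\lambda(g,i):g\in Stab_{\mathcal{G}_G}(X)\}$. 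Since elements of $Stab_{\mathcal{G}_G}(X)$ fix every letter, $g\mapsto\lambda(g,i)$ is a homomorphism on $Stab_{\mathcal{G}_G}(X)$, so $H_i\le\mathcal{G}_G$. If $e$ is incident to $i$ then $e=\lambda(e^{2},i)\in H_i$. If $i$ and $j$ are joined by an edge $f$, a short computation with the product rule gives $\lambda(fhf^{-1},i)=f\,\lambda(h,j)\,f^{-1}$ for every $h\in Stab_{\mathcal{G}_G}(X)$, so $fH_jf^{-1}\subseteq H_i$; together with $f\in H_i$ this forces $H_j\subseteq H_i$, and symmetrically $H_i=H_j$. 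Since $G$ is connected, all $H_i$ coincide with a single subgroup $H$, which contains every edge (each edge is incident to some vertex) and hence equals $\mathcal{G}_G$; thus $\psi$ is onto in each coordinate.

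For (2), a connected graph with $|E|\ge2$ has a vertex $v$ of degree at least $2$; pick two distinct incident edges and orient them — harmlessly, by Proposition \ref{lemma_iso} — as a directed path $u\to v\to w$ (with $u,v,w$ pairwise distinct), and call $e,f$ the corresponding generators. A direct computation of $[e,f]=e^{-1}f^{-1}ef$ shows it has $f$ in coordinate $v$, $f^{-1}$ in coordinate $w$, $id$ elsewhere, and permutation part the $3$-cycle $(u\ v\ w)$. As the permutation part is nontrivial, $[e,f]\neq id$, so $\mathcal{G}_G$ is non-abelian; on the other hand, squaring the tuple and using $(u\ v\ w)^{3}=id$ yields $[e,f]^{3}=id$, so $[e,f]$ has order $3$.

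For (3), write $f_i=e_i^{\varepsilon_i}=(v_{i-1},v_i)$ for the generators of the directed cycle $v_0\to v_1\to\cdots\to v_{t-1}\to v_t=v_0$, and put $g=f_1\cdots f_t$. With $\sigma_i=(v_{i-1}\ v_i)$ the permutation part of $f_i$ and $\rho_m=\sigma_1\cdots\sigma_m$ (so $\rho_0=id$), the product rule gives $\lambda(g,x)=\prod_{i=1}^{t}\lambda(f_i,\rho_{i-1}(x))$, and $\lambda(f_i,y)=f_i$ precisely when $y=v_{i-1}$. An easy induction identifies $\rho_m$ with the cycle $(v_0\ v_m\ v_{m-1}\ \cdots\ v_1)$, whence $\rho_{i-1}(v_0)=v_{i-1}$ for all $i$ while $\rho_{i-1}(x)\neq v_{i-1}$ for $x\neq v_0$. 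Therefore $\lambda(g,v_0)=f_1\cdots f_t=g$, $\lambda(g,x)=id$ for $x\neq v_0$, and the permutation part $\sigma=\rho_t$ is the $(t-1)$-cycle $(v_1\ v_{t-1}\ v_{t-2}\ \cdots\ v_2)$, which fixes $v_0$ and moves no vertex onto $v_0$. It follows that $g^{n}=(\dots,g^{n},\dots)\sigma^{n}$, with $g^{n}$ in coordinate $v_0$ and $id$ in the others; for $n=t-1$, since $\sigma^{t-1}=id$, we obtain $g^{t-1}\in Stab_{\mathcal{G}_G}(X)$ with all coordinates in $\{g^{t-1},id\}$, so $g^{t-1}=id$ by Lemma \ref{lemrest}.

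For (4), set $K=\psi\bigl(\mathcal{G}_G'\cap Stab_{\mathcal{G}_G}(X)\bigr)\le\mathcal{G}_G^{k}$, a subgroup; by (2), $\mathcal{G}_G'$ is nontrivial. For adjacent edges $e,f$ meeting at a vertex $v$ and signs $\varepsilon,\delta\in\{\pm1\}$, the element $[e^{2\varepsilon},f^{2\delta}]$ lies in $\mathcal{G}_G'\cap Stab_{\mathcal{G}_G}(X)$, and, computed coordinatewise from the description of $e^{2}$, its $\psi$-image is the tuple with $[e^{\varepsilon},f^{\delta}]$ in coordinate $v$ and $id$ elsewhere. Now fix a coordinate $i$. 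Since $G$ is connected, the action of $\mathcal{G}_G$ on $V$ is the full symmetric group $Sym(V)$, so some $\pi\in\mathcal{G}_G$ sends $i$ to $v$; then $\pi[e^{2\varepsilon},f^{2\delta}]\pi^{-1}\in\mathcal{G}_G'\cap Stab_{\mathcal{G}_G}(X)$ has $\psi$-image supported in coordinate $i$, with entry $\lambda(\pi,i)\,[e^{\varepsilon},f^{\delta}]\,\lambda(\pi,i)^{-1}$. Conjugating this further by elements $h\in Stab_{\mathcal{G}_G}(X)$ with $\lambda(h,i)$ arbitrary — which exist by the fractality proved in (1) — puts into $K$ every tuple supported in coordinate $i$ whose entry runs through the full $\mathcal{G}_G$-conjugacy class of $[e^{\varepsilon},f^{\delta}]$. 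Since $K$ is a subgroup and $\mathcal{G}_G'$ is the normal closure of $\{[e^{\varepsilon},f^{\delta}]:e,f\in E'\text{ adjacent},\ \varepsilon,\delta\in\{\pm1\}\}$ (commutators of non-adjacent edges vanishing), $K$ contains the copy of $\mathcal{G}_G'$ in each coordinate, hence $(\mathcal{G}_G')^{k}\subseteq K$, which is the asserted weak regular branching. The delicate points are the permutation bookkeeping in (3) (identifying $\rho_m$ and $\sigma$) and, especially, the transport-and-spreading argument in (4) realising every coordinate and every conjugacy class; I expect the latter to require the most care.
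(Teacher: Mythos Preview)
Your argument is correct in all four parts. Parts (2), (3) and (4) follow the same strategy as the paper: for (2) the commutator of two incident edges is computed and shown to have order $3$; for (3) the product along the directed cycle is shown to have the form $(\dots,g,\dots)\sigma$ with $\sigma$ a $(t-1)$-cycle fixing $v_0$, whence Lemma~\ref{lemrest} applies; and for (4) one first exhibits $[e^{2},f^{2}]$ with $[e,f]$ concentrated at the shared vertex, then transports it to every coordinate and spreads it over all conjugates using fractality and the normal generation of $\mathcal{G}_G'$ by commutators of incident edges.

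The only substantive difference is in (1). The paper fixes a generator $e$ and, for a target coordinate $h$, explicitly builds a path element $g$ with $g^{t+1}\in Stab_{\mathcal{G}_G}(V)$ and then computes $\lambda(g^{t+1}g^{-1}e^{2}gg^{-(t+1)},x_h)=e$. You instead prove that the subgroups $H_i=\{\lambda(g,i):g\in Stab_{\mathcal{G}_G}(X)\}$ satisfy $H_i=H_j$ whenever $i$ and $j$ are adjacent (via the identity $\lambda(fhf^{-1},i)=f\,\lambda(h,j)\,f^{-1}$ together with $f\in H_i$), and then use connectedness. This is a cleaner route: it avoids the explicit path bookkeeping and makes the role of connectedness transparent. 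Similarly, in (4) you replace the paper's reuse of that explicit path construction by the transitivity of the first-level action (indeed, edge transpositions of a connected graph generate $Sym(V)$, though transitivity alone suffices), which is a minor streamlining of the same idea. Your permutation bookkeeping in (3), identifying $\rho_m=(v_0\ v_m\ v_{m-1}\cdots v_1)$ and hence $\sigma=\rho_t$ as the $(t-1)$-cycle on $v_1,\dots,v_{t-1}$ fixing $v_0$, is correct and in fact more explicit than the paper's.
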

\begin{proof}
\begin{enumerate}
\item We have to show that  the map $\psi: Stab_{\mathcal{G}_G}(V)\to \mathcal{G}_G^k$ given by $g\mapsto (g_1,\ldots, g_{k})$, with $g_i=\lambda(g,x_i)$, is surjective on each factor. Let $e$ be an edge oriented from the vertex $x_i$ to the vertex $x_j$ (suppose $i < j$). In order to prove fractalness, it is enough to produce an element of $Stab_{\mathcal{G}_G}(V)$ whose $h$-th restriction is $e$, for each $h=1,\ldots, k$. An explicit computation gives:
$$
e^2=(id, \ldots, id,\underbrace{ e}_{i\textrm{-th place}}, id, \ldots, id, \underbrace{ e}_{j\textrm{-th place}},id,\ldots, id)\in Stab_{\mathcal{G}_G}(V).
$$
Therefore, we can focus on the case where $h\neq i,j$. Consider a path of length $t$ in $G$ from the vertex $x_i$ to the vertex $x_h$ through the vertices $x_i=x_{i_0},x_{i_1},\ldots, x_{i_{t-1}}, x_{i_t}=x_h$. Suppose that the path passes along the edges $e_{i_1},\ldots, e_{i_t}$, where the endpoints of $e_{i_l}$ are $x_{i_{l-1}},x_{i_l}$, for each $l=1,\ldots, t$. Take the group element $g:=e_{i_1}^{\varepsilon_1}\cdots e_{i_t}^{\varepsilon_t}$, where $\varepsilon_{s}= 1$ if in the path from $x_i$ to $x_h$ the edge $ e_{i_s}$ is oriented in the direction of the path, and  $ \varepsilon_{s}=-1$ otherwise. One can check that
$$
g=(id, \ldots, id, \underbrace{ g}_{i\textrm{-th place}},id,\ldots, id)\sigma
$$
where $\sigma$ is a cyclic permutation of length $t+1$ such that $\sigma(i_l)=i_{l-1}$ for any $l=1,\ldots, t$ and $\sigma(i)=h$. A direct computation gives
$$
g^{t+1}=(id, \ldots, id, \underbrace{ g}_{i\textrm{-th place}},\ldots, \underbrace{ g}_{i_1\textrm{-th place}}, \ldots, \underbrace{ g}_{h\textrm{-th place}},id, \ldots, id),
$$
i.e., the nontrivial restrictions in the self-similar representation of $g^{t+1}$ coincide with $g$ and correspond to the positions $i=i_0,i_1,\ldots, i_t=h$. Moreover, notice that $g^{t+1}\in Stab_{\mathcal{G}_G}(V)$. Being $Stab_{\mathcal{G}_G}(V)$ a normal subgroup of $\mathcal{G}_G$, one has that $g^{-1}e^2g\in Stab_{\mathcal{G}_G}(V)$ and it is easy to check that
$$
\lambda(g^{-1}e^2g,x_{i_t})=g^{-1}eg.
$$
Then we obtain
$$
\lambda(g^{t+1}g^{-1}e^2gg^{-(t+1)}, x_{i_t})=e,
$$
i.e., we have moved the generator $e$ to the $h$-th restriction in the self-similar representation of $g^{t+1}g^{-1}e^2gg^{-(t+1)}$. The same method can be applied to any generator and to any $h$, and this concludes the proof.
\item Let $e=(x_i,x_j)$ and $f=(x_j,x_h)$, so that $e$ and $f$ share the vertex $x_j$, and their self-similar representations as generators of ${\mathcal{G}_G}$ are
\begin{eqnarray*}
\hspace{1.5cm} e=(id, \ldots, id, \underbrace{ e}_{i\textrm{-th place}},id,\ldots, id)(i,j) \quad  f=(id, \ldots, id, \underbrace{ f}_{j\textrm{-th place}},id,\ldots, id)(j,h).
\end{eqnarray*}
A direct computation gives
$$
[e,f]=(id, \ldots, id,\underbrace{ f}_{j\textrm{-th place}}, id, \ldots, id, \underbrace{f^{-1} }_{h\textrm{-th place}},id,\ldots, id)(i,j,h).
$$
Therefore $[e,f]\neq id$ but  $[e,f]^3=id$.
\item Up to change the orientation of some edges (see Proposition \ref{lemma_iso}), we can assume that $e_1 ,\ldots, e_t$ is a directed cycle centered at $x_i$, i.e., a directed closed path with all edges oriented in the same direction. Suppose that the cycle contains the vertices $x_i = x_{j_0},\ldots, x_{j_t}=x_i$. In particular, $e_i$ corresponds to the directed edge $(x_{j_{i-1}}, x_{j_{i}})$. A direct computation gives
$$
e_1\cdots e_t=(id, \ldots, id,\underbrace{ e_1\cdots e_t}_{i\textrm{-th place}}, id, \ldots, id)\sigma
$$
where $\sigma$ is a cyclic permutation of length $t-1$ such that $\sigma(i)=i$. In particular, $\sigma^{t-1}=id$ and so
$$
 (e_1\cdots e_t)^{t-1}=(id, \ldots, id,\underbrace{ (e_1\cdots e_t)^{t-1}}_{i\textrm{-th place}}, id, \ldots, id)
$$
and so $(e_1\cdots e_t)^{t-1}=id$ by Lemma \ref{lemrest}.
\item We have to prove that, given any $k$-tuple $(g_1, \ldots, g_k)$, with $g_i \in \mathcal{G}_G'$ for each $i$, there exists an element $g \in \mathcal{G}_G' \cap Stab_{\mathcal{G}_G}(V)$ such that $\psi (g) =  (g_1, \ldots, g_k)$. We write $\mathcal{G}_G'>\mathcal{G}_G'\times \cdots \times \mathcal{G}_G'$ to denote this condition. First, recall that if $e$ and $f$ do not share any vertex, then they commute in $\mathcal{G}_G$, since they act nontrivially on sets of letters which are disjoint.

It follows that $\mathcal{G}_G'$ is normally generated by the commutators $[e,f]$ such that $e$ and $f$ share a vertex. So let $e,f$ be generators of $\mathcal{G}_G$ such that $e=(x_i,x_j)$ and $f=(x_j,x_k)$. We can suppose, without loss of generality, that $i=1, j=2$ and $k=3$. In particular:
$$
e^2=(e,e,id,\ldots, id) \qquad  f^2=(id,f,f,id,\ldots, id).
$$
A direct computation gives
$$
[e^2,f^2]= (id, [e,f], id,\ldots, id).
$$
By proceeding as in the proof of Claim (1), we get that, given an index $h\neq 2$, it is possible to construct an element $g$ such that
$$
g^{-1}[e^2,f^2]g=(id, \ldots, id,\underbrace{[e,f]}_{h\textrm{-th place}},id,\ldots, id)\in \mathcal{G}_G'.
$$
Then, by using that $\mathcal{G}_G'$ is normal and that  $\mathcal{G}_G$ is fractal, we get
$$
(id,\ldots ,id ,\underbrace{ [e,f]^{\mathcal{G}_G}}_{h\textrm{-th place}},id, \ldots,  id) \subseteq \mathcal{G}_G'.
$$
Being  $h$ arbitrary and by applying  the same argument to any pair of generators, we get
$$
\mathcal{G}_G'>\mathcal{G}_G'\times \cdots \times \mathcal{G}_G',
$$
which is our claim.
\end{enumerate}
\end{proof}
If we consider the Case (1) in Example \ref{esempibase}, where the initial graph contains only one edge, it gives rise to the Adding machine. This group is fractal but it is abelian, free and not weakly regular branch. Basically, this is the only nontrivial case for which the properties (2)-(4) of Theorem \ref{teo_principale} do not hold.\\

Let us focus now on the semigroup structure of graph automaton groups. This will allow to know the growth of graph automaton groups (see Corollary \ref{corogrowthexp}).

\begin{theorem}\label{teo_esp}
Let $G=(V,E)$ be a graph such that $|E|\geq 2$. Let $e,f$ be edges that share a vertex in $G$. Then the semigroup generated by $e$ and $f$ is free.
\end{theorem}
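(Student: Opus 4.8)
The plan is to reduce to the smallest genuine case and then run two nested inductions on word length, exploiting the self-similar structure. Since $e$ and $f$ share a vertex, after relabelling the vertices (Proposition \ref{lemma_iso}) and choosing orientations we may assume $e=(x_1,x_2)$, $f=(x_2,x_3)$, so that
\[
e=(e,id,id,\ldots,id)(1,2),\qquad f=(id,f,id,\ldots,id)(2,3);
\]
since the letters $x_4,\ldots,x_k$ are fixed by both generators --- and in any case, by Proposition \ref{prop_subgroup}, since we may take $G=P_3$ --- we henceforth work inside $\mathcal{G}_{P_3}$ acting on $\{1,2,3\}^{\ast}$. Write $\rho$ for the canonical homomorphism from the free monoid on $\{e,f\}$ into $\mathcal{G}_{P_3}$; the assertion is precisely that $\rho$ is injective on nonempty words.

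The first step is a combinatorial description of the restrictions of $\rho(W)$ for a positive word $W=W_1\cdots W_n$. Using the recursions $e(1w)=2\,e(w)$, $f(2w)=3\,f(w)$ and the fact that every other transition of $e$ and $f$ has trivial restriction, one checks that each $\lambda(\rho(W),c)$ is again a positive word in $\{e,f\}$, which I will call $W\langle c\rangle$: reading $W$ from left to right while pushing the ``state'' $c$ through the transpositions $(1,2)$ (for a letter $e$) and $(2,3)$ (for a letter $f$), one emits an $e$ exactly at each step realising a transition $1\to 2$ and an $f$ exactly at each step realising a transition $2\to 3$, and $W\langle c\rangle$ is the word of emitted letters. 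Two features of this rule will be used: (i) if the initial state is $c=1$, then the first emitted letter, if any, is an $e$ --- to emit at all one must first leave state $1$, and the only exit from state $1$ is a transition $1\to 2$; and (ii) if the permutation induced by $\rho(W)$ on the first level is trivial and $|W|\ge 2$, then one of $W\langle 1\rangle$ (if $W$ contains a letter $e$) or $W\langle 2\rangle$ (if $W=f^{m}$) is a nonempty positive word strictly shorter than $W$ --- strictness because one cannot emit at three consecutive steps, the state being $3$ after two emissions, whence no further emission is possible.

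I would then conclude in two inductive steps. \emph{Step 1: no nonempty positive word equals $id$.} By induction on $|W|$: if $\rho(W)$ acts nontrivially on the first level this is clear; otherwise (ii) provides a vertex $c$ with $\lambda(\rho(W),c)=\rho(W\langle c\rangle)$ for a nonempty $W\langle c\rangle$ strictly shorter than $W$, so $\lambda(\rho(W),c)\ne id$ by induction and hence $\rho(W)\ne id$. \emph{Step 2: $\rho$ is injective.} By induction on $|U|+|V|$, for nonempty positive $U,V$ with $\rho(U)=\rho(V)$. If $U$ and $V$ begin with the same letter, cancel it and apply the inductive hypothesis (Step 1 excludes the possibility that just one of the two remaining words is empty). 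If they begin with different letters, then --- after possibly interchanging $U$ and $V$ --- write $U=eU'$, $V=fV'$; comparing the restrictions at $2$ of $\rho(U)=e\,\rho(U')$ and $\rho(V)=f\,\rho(V')$ gives $\lambda(\rho(U'),1)=f\cdot\lambda(\rho(V'),3)$, that is,
\[
\rho\bigl(U'\langle 1\rangle\bigr)=\rho\bigl(f\cdot V'\langle 3\rangle\bigr),
\]
an equality between positive words of total length strictly smaller than $|U|+|V|$. The right-hand word is nonempty and begins with $f$, whereas by (i) the left-hand word is either empty --- impossible by Step 1, as the right-hand side would then be a nonempty positive word representing $id$ --- or begins with $e$ --- impossible by the inductive hypothesis of Step 2, which would force the two words to coincide. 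Hence distinct first letters cannot occur, and the induction closes.

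The hard part will be pinning down the ``emission'' transducer underlying $\mathcal{A}_{P_3}$: one must verify carefully that $\lambda(\rho(W),c)$ really is a \emph{positive} word in $\{e,f\}$ (no inverses arise, since the only nontrivial restrictions of $e$ and $f$ are $e$ and $f$ themselves), that the forced-first-letter property (i) holds, and that the length drops strictly except precisely in the cases where $\rho(W)$ is already visibly nontrivial on the first level. Once these structural facts about $\mathcal{A}_{P_3}$ are established, both inductions are routine.
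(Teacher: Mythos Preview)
Your proof is correct, and it takes a genuinely different route from the paper's.

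First, a cosmetic difference: you orient $f=(x_2,x_3)$, giving $f=(id,f,id)(2,3)$; the paper instead works with $f=(id,id,f)(2,3)$, i.e.\ with $b^{-1}$ rather than $b$ in the notation of Example~\ref{esempibase}. Strictly speaking these are different semigroups, but both reductions are in the same spirit and either suffices for Corollary~\ref{corogrowthexp}.

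The substantive difference is structural. The paper runs a single minimal-counterexample argument and proceeds by case analysis on the prefix shape of $u$ and $v$ (writing $u=e^{m}f^{k}z$, $v=f^{n}e^{t}y$ and whittling down $m,n,z,y$ through repeated inspection of restrictions at various positions). You instead isolate one clean transducer fact --- property~(i), that any restriction at position~$1$ of a positive word begins with $e$ --- and then a single comparison of restrictions at position~$2$ finishes Step~2 in one stroke: $\lambda(\rho(eU'),2)=\rho(U'\langle 1\rangle)$ is empty or begins with $e$, while $\lambda(\rho(fV'),2)=\rho(fV'\langle 3\rangle)$ begins with $f$. This bypasses the paper's case analysis entirely, and the explicit separation into Step~1 (no positive word is trivial) and Step~2 (injectivity) makes the induction more transparent. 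What the paper's approach buys is that it avoids having to articulate the emission formalism; what yours buys is a shorter and more conceptual endgame once that formalism is in place.

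One point to tighten: your justification of strict length decrease in~(ii) via ``one cannot emit at three consecutive steps'' only handles $|W|\ge 3$ directly. For $|W|=2$ you should note that the unique length-$2$ word with $|W\langle 1\rangle|=2$ is $W=ef$, whose first-level permutation is $(1,3,2)\ne id$, so it is excluded by the hypothesis of~(ii).
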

\begin{proof}
By virtue of Proposition \ref{prop_subgroup}, it is enough to consider the group $H$ generated by the elements
$$
e=(e,id,id)(1,2) \qquad f=(id,id ,f)(2,3),
$$
since it will be a subgroup of any group $\mathcal{G}_G$ associated with a connected graph $G$ with more than one edge. If the semigroup $\mathcal{S}_{e,f}$ is free, then we are done.
In particular, we have to prove that if $u\neq v$ in $\{e,f\}^\ast$, then it must be $u\neq v$ in $\mathcal{S}_{e,f}$ (notice that, in the notation of Case (2) of Example \ref{esempibase}, we are going to show that $\mathcal{S}_{a,b^{-1}}$ is free).\\
\indent Observe that, by using the self-similar representations of $e$ and $f$, we get:
$$
e^2=(e,e,id),\quad \ ef=(e,id,f)(1,3,2),\quad \ fe=(e,id,f)(1,2,3),\quad \ f^2=(id,f,f).
$$
This implies that any word in $\{e,f\}^\ast$ of length greater than $1$ has restrictions of shorter length. Moreover, if $u=(u_1,u_2,u_3)\sigma$ and $v=(v_1,v_2,v_3)\tau$, then a relation $u=v$ corresponds to the permutation equality $\sigma=\tau$ and to the restriction equality $u_i=v_i$ in $\mathcal{S}_{e,f}$, for each $i=1,2,3$. In particular, if at least one between $u,v$ has length greater than $1$, this would give rise to relations $u_i=v_i$ such that $|u_i|+|v_i|<|u|+|v|$, for $i=1,2,3$.\\
\indent Now let $u=v$ be a relation in the semigroup with smallest length $|u|+|v|$ in $\{e,f\}^\ast$. By the cancellativity of the semigroup, we may assume that the words $u$ and $v$ do not start and end with the same letter; in particular, we can suppose $u=eu'$ and $v=fv'$.

Since we have supposed that $u=v$ is a relation in the semigroup with smallest length $|u|+|v|$, then $u$ and $v$ must have the same restrictions (as words in $\{e,f\}^\ast$, and not only in $\mathcal{S}_{e,f}$) at each position, otherwise, by considering restrictions, we would get relations of a shorter length.
In what follows, we show that in all cases in which $u$ and $v$ do not coincide, there exists some restriction in which they differ, and this contradicts minimality.\\
\indent If $u=e^{\ell}$, and $v=f^nv'$ (with $\ell, n \geq 1$), then by considering the restriction to the position $3$ we get the equation $id=fv''$, which is a contradiction by the minimality of the relation $u=v$. By using a symmetric argument, we may assume
\[
u=e^{m}f^{k}z, \quad v=f^{n}e^{t}y
\]
for some $m,k,n,t\geq 1$.\\
\indent Let $m,n\geq 2$. By considering the restriction to the position $2$ we get $ew= f w'$, which is a contradiction again.\\
\indent Therefore, without loss of generality, we may assume $m=1$. Suppose now that $n\geq 2$. We consider two cases: either $z$ is the empty word or not. In the first case, the restriction to the position $2$ gives $id=fw$, and this is a contradiction for the same reason as above. If $z$ is not the empty word, we necessarily have $z=ez'$ and in this case, looking at the restriction to the position $2$, we get $ez''=fw$, a contradiction again.\\
\indent Thus, we deduce that it must be $m=n=1$, i.e., $u=ef^kz$, $v=fe^ty$. If $z,y$ are nonempty, then necessarily we have $u=ef^kez'$, $v=fe^tfy'$. Now, by restricting to the position $2$, we get $ez''=fy''$, a contradiction. Thus, without loss of generality, we may assume $z=\emptyset$ so that $u=ef^k$.\\
\indent Now if $y$ is nonempty and $v=fe^tfy'$, by considering the restriction to the position $2$, we get $id=fy''$, a contradiction. Therefore, we reduce to the case $z=y=\emptyset$, that is, we can assume $u=ef^{k}, v=fe^{t}$. Since we have $\mu(ef^{k},2)= 1 \neq 3 = \mu(fe^t,2)$, we have a contradiction and the proof is completed.
\end{proof}

\begin{corollary}\label{corogrowthexp}
Let $G=(V,E)$ be a graph such that $|E|\geq 2$. Then $\mathcal{G}_G$  has exponential growth.
\end{corollary}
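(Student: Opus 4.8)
The plan is to read this off directly from Theorem \ref{teo_esp}. First I would note that, since $G$ is connected (an assumption in force since Proposition \ref{proppp}) and $|E|\geq 2$, the graph $G$ cannot be a disjoint union of isolated edges, so there exist two edges $e,f\in E$ sharing a common vertex. By Theorem \ref{teo_esp}, the subsemigroup $\mathcal{S}_{e,f}$ of $\mathcal{G}_G$ generated by $e$ and $f$ is free of rank $2$; in particular, any two distinct words in $\{e,f\}^\ast$ represent distinct elements of $\mathcal{G}_G$.

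Next I would count elements by their length. Fix the symmetric generating set of $\mathcal{G}_G$ consisting of the generators in $E'$ together with their inverses. Every word $w\in\{e,f\}^\ast$ of length $m$ represents an element of $\mathcal{G}_G$ of length at most $m$ with respect to this generating set, and there are exactly $2^m$ such words, all representing distinct group elements by freeness of $\mathcal{S}_{e,f}$. Hence
$$
\gamma_S(n)=\#\{g\in\mathcal{G}_G:\ |g|_S\le n\}\ \ge\ \sum_{m=0}^{n}2^{m}\ =\ 2^{n+1}-1 .
$$

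Finally, taking $n$-th roots and letting $n\to\infty$ gives
$$
\lambda_{\mathcal{G}_G}=\lim_{n\to\infty}\sqrt[n]{\gamma_S(n)}\ \ge\ \lim_{n\to\infty}\sqrt[n]{\,2^{n+1}-1\,}\ =\ 2\ >\ 1,
$$
and since the growth rate does not depend on the chosen generating set, $\mathcal{G}_G$ has exponential growth. There is essentially no obstacle in this argument: the only points worth a brief remark are that connectedness together with $|E|\ge 2$ forces the existence of two adjacent edges, and that a word of length $m$ in $\{e,f\}$ has length at most $m$ as an element of $\mathcal{G}_G$ over the full generating set, so that freeness of $\mathcal{S}_{e,f}$ indeed yields the exponential lower bound on $\gamma_S$.
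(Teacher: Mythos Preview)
Your proof is correct and follows essentially the same approach as the paper: both deduce exponential growth from Theorem~\ref{teo_esp} by observing that the free subsemigroup on $e,f$ produces at least $2^n$ distinct elements of length at most $n$. Your version is slightly more explicit (noting that connectedness with $|E|\ge 2$ forces adjacent edges, and carrying out the $n$-th root computation), but the argument is the same.
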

\begin{proof}
It follows from the fact that $\mathcal{G}_G$ contains a free semigroup of two generators $a,b$. In fact, in this case, with any generating set containing $a$ and $b$ we can construct at least $2^n$ distinct group elements of length $n$.
\end{proof}

\begin{rem}\rm
Notice that, for the notion of semigroup, the chosen orientation of the edges is important. Observe that the Adding machine, which is isomorphic to the infinite cyclic group $\mathbb{Z}$, has polynomial growth; on the other hand, the associated graph $G$ is the path $P_2$ on two vertices (see Case (1) in Example \ref{esempibase}), which does not satisfy the hypothesis of Theorem \ref{teo_esp} and Corollary \ref{corogrowthexp}.
\end{rem}

\subsection{A connection to right-angled Artin groups}
Let $G=(V,E)$ be a simple graph. One can construct a group associated with such a graph in the following way: the vertex set $V$ is the generating set and the only relations are given by the commutators of adjacent vertices. More precisely, given $G=(V,E)$, the group with presentation
$$
W(G) = \langle v\in V| vu=uv \textrm{ if } \{u,v\}\in E \rangle
$$
is the associated \emph{right-angled Artin group}. For more details about the theory, the reader is referred to \cite{artin}.

Given a graph  $G=(V,E)$, one can define its dual graph $G'$ to be the graph with vertex set $E$, where $e$ and $f$ are adjacent in $G'$ if they share a common vertex $v$ in $G$.\\
\indent Moreover, one can construct the complement $\overline{G}$ of $G$ having the same vertex set $V$, and where two vertices are adjacent if and only if they are not adjacent in $G$.

Let $G=(V,E)$ be a graph. The following proposition shows that there exists a relation between the groups $\mathcal{G}_G$ and $W(\overline{G'})$.

\begin{proposition}\label{artino}
Let $G=(V,E)$ be a simple graph. Then there exists an epimorphism $\phi:W(\overline{G'})\to \mathcal{G}_G$.
\end{proposition}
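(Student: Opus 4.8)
The plan is to build $\phi$ directly on generators and invoke the universal property of the right-angled Artin presentation. First I would fix an orientation $E'$ of the edges of $G$, so that each edge $e\in E$ is identified with an automaton state and hence with a generator of $\mathcal{G}_G$; by Proposition \ref{lemma_iso} this choice is irrelevant for the isomorphism type of $\mathcal{G}_G$. Since the vertex set of $\overline{G'}$ is precisely $E$, I would define $\phi$ on the free group $F(E)$ by sending each vertex $e$ of $\overline{G'}$ to the corresponding generator $e\in\mathcal{G}_G$.

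The only thing to verify is that $\phi$ kills every defining relator of $W(\overline{G'})$. A defining relation of $W(\overline{G'})$ has the form $ef=fe$ with $\{e,f\}$ an edge of $\overline{G'}$, that is, $e$ and $f$ are distinct edges of $G$ that do \emph{not} share a vertex. But this is exactly the situation observed just before Theorem \ref{teo_principale}: if $e,f\in E'$ share no vertex, then their actions are nontrivial on disjoint subsets of $V$, and hence $[e,f]=e^{-1}f^{-1}ef=id$ in $\mathcal{G}_G$. Thus every defining relator of $W(\overline{G'})$ lies in $\ker(F(E)\to\mathcal{G}_G)$, so $\phi$ descends to a group homomorphism $\phi:W(\overline{G'})\to\mathcal{G}_G$. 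Surjectivity is then immediate, since the set $\{\phi(e):e\in E\}$ is exactly the generating set of $\mathcal{G}_G$ coming from $\mathcal{A}_G$.

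I do not expect any real obstacle here: the entire content is the elementary commutation remark together with the universal property of RAAG presentations. The single point worth making explicit is the identification of generating sets — vertices of $\overline{G'}$ versus oriented edges/automaton states of $\mathcal{A}_G$ — which is taken care of by Proposition \ref{lemma_iso}. It is also worth observing that $\phi$ is in general far from being an isomorphism: for instance $\mathcal{G}_G$ contains elements of finite order and satisfies the cycle relations of Theorem \ref{teo_principale}, whereas $W(\overline{G'})$ is torsion-free, so one cannot hope for more than an epimorphism in general.
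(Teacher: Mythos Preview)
Your proposal is correct and follows essentially the same approach as the paper: identify the generating set of $W(\overline{G'})$ with $E$, send each $e$ to the corresponding (oriented) generator of $\mathcal{G}_G$, and check that the defining commutation relations of the RAAG are satisfied because edges not sharing a vertex in $G$ commute in $\mathcal{G}_G$. Your write-up is in fact a bit more explicit than the paper's (you spell out the universal property and the role of Proposition~\ref{lemma_iso}), but the underlying argument is identical.
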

\begin{proof}
First of all notice that the generating set of $W(\overline{G'})$ is precisely $E$ (up to consider inverses). Let $e\in E$, we define $\phi(e)=e$, where $e$ is supposed to be oriented in $G$. The map $\phi$ is a well defined homomorphism. Moreover, the set of adjacent vertices  in $\overline{G'}$ corresponds exactly to those edges in $G$ that do not share any common vertex. These edges commute by definition, when regarded as elements of $\mathcal{G}_G$. In this way, we have that the set of relations in $W(\overline{G'})$ is contained in the set of relations of $\mathcal{G}_G$. This concludes the proof.
\end{proof}

\section{Schreier graphs}\label{sezioneschreier}
In this section we recall the notion of Schreier graphs and we study some properties of them in the context of graph automaton groups.\\
\indent Let $ \mathcal{G}$ be a finitely generated group with a set $S$ of generators such that $id  \not\in S$ and $S =S^{-1}$,  and
suppose that $ \mathcal{G}$ acts on a set $M$. Then one can consider a graph $\Gamma(\mathcal{G}, S, M)$  with vertex set $M$, where two vertices $m, m'$
are joined by an edge if there exists $s \in S$ such that $s(m) = m'$. If this is the case, we label the edge from $m$ to $m'$ by $s$, and the edge from $m'$ to $m$ by $s^{-1}$. Equivalently, we can think that the same (undirected) edge is labeled by $s$ near $m$ and by $s^{-1}$ near $m'$.\\
\indent If the action of $\mathcal{G}$ on $M$ is transitive, then the graph $\Gamma( \mathcal{G}, S, M)$ is connected and corresponds to the classical notion of Schreier graph $\Gamma( \mathcal{G}, S, Stab_{ \mathcal{G}}(m))$ of the group $ \mathcal{G}$ with respect to the stabilizer subgroup $Stab_\mathcal{G}(m)$ for some (any) $m \in M$ (see \cite{nekrashevyc}).

\begin{definition}
Let $\mathcal{A} = (S,X,\lambda,\mu)$ be an invertible automaton and let $G(\mathcal{A})$ be the associated automaton group. The $n$-th Schreier graph $\Gamma_n=\Gamma_n( G(\mathcal{A}),S, X^n)$ is the Schreier graph given by the action of $G(\mathcal{A})$ over $X^n$, with respect to the generators given by $S$ and their inverses.
\end{definition}
Notice that, although Schreier graphs are defined as directed and labeled graphs, for our purposes we will often consider them as undirected and unlabeled graphs.

In what follows, we denote by $\Gamma_n^G$ the $n$-th Schreier graph of the graph automaton group $\mathcal{G}_G$. The vertex set of $\Gamma_n^G$ is identified with $V^n$, where $G=(V,E)$. Observe that $\Gamma_{1}^G$ coincides with $G$ up to remove loops and multi-edges from  $\Gamma_{1}^G$. Therefore, we can say that $\Gamma_{1}^G$ and $G$ coincide as simple graphs. Moreover, the Schreier graph $\Gamma_n^G$ is a regular graph of degree $2|E|$ by definition.

\begin{example}\rm
The Schreier graphs $\Gamma_n^G$, for $n=1,2,3,4$, of the \emph{Tangled odometer group} introduced in Example \ref{esempibase}, obtained when $G$ is the path $P_3$ on $3$ vertices, are shown in Fig. \ref{primitre} and Fig. \ref{quattro}. Notice that infinite Schreier graphs of the same group, with a different system of generators, have been classified in \cite{mio}.
\begin{figure}[h]
\begin{center}    \scriptsize
\psfrag{0}{$1$}\psfrag{1}{$2$}\psfrag{2}{$3$}
\psfrag{11}{$22$}\psfrag{00}{$11$}\psfrag{22}{$33$} \psfrag{02}{$13$}\psfrag{20}{$31$}
\psfrag{111}{$222$}\psfrag{000}{$111$}\psfrag{222}{$333$}
\psfrag{202}{$313$}\psfrag{020}{$131$}
\includegraphics[width=0.8\textwidth]{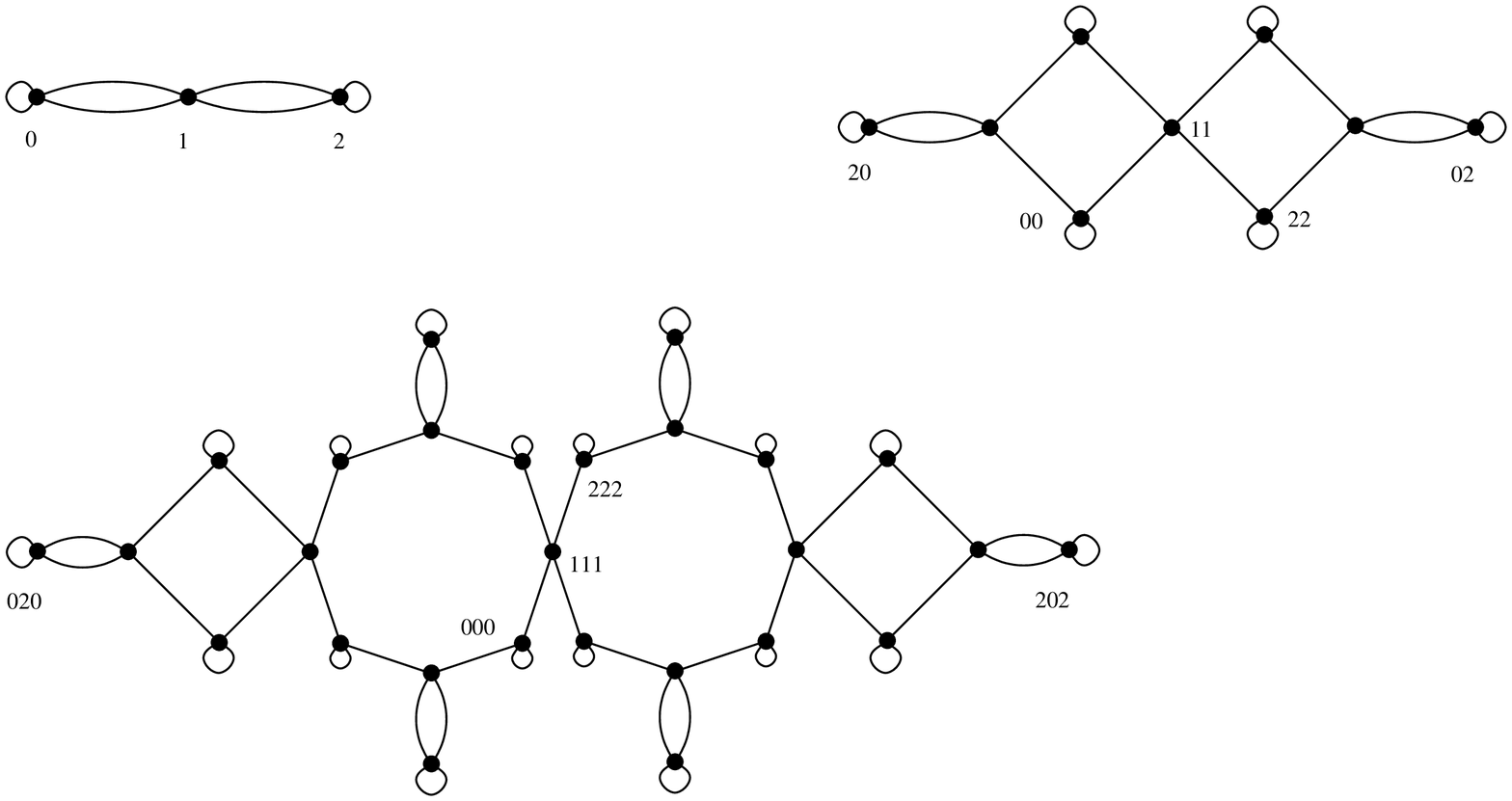}
\end{center}\caption{The Schreier graphs $\Gamma_1^{P_3}$, $\Gamma_2^{P_3}$, $\Gamma_3^{P_3}$ of the Tangled odometer group.} \label{primitre}
\end{figure}

\begin{figure}[h]
\begin{center}     \tiny
\psfrag{2020}{$3131$}\psfrag{0202}{$1313$}
\psfrag{1111}{$2222$}\psfrag{0000}{$1111$}\psfrag{2222}{$3333$}

\psfrag{2223}{$2223$}\psfrag{2113}{$2113$}\psfrag{2123}{$2123$}\psfrag{2213}{$2213$}\psfrag{2313}{$2313$}
\psfrag{3113}{$3113$}\psfrag{3123}{$3123$}

\includegraphics[width=0.9\textwidth]{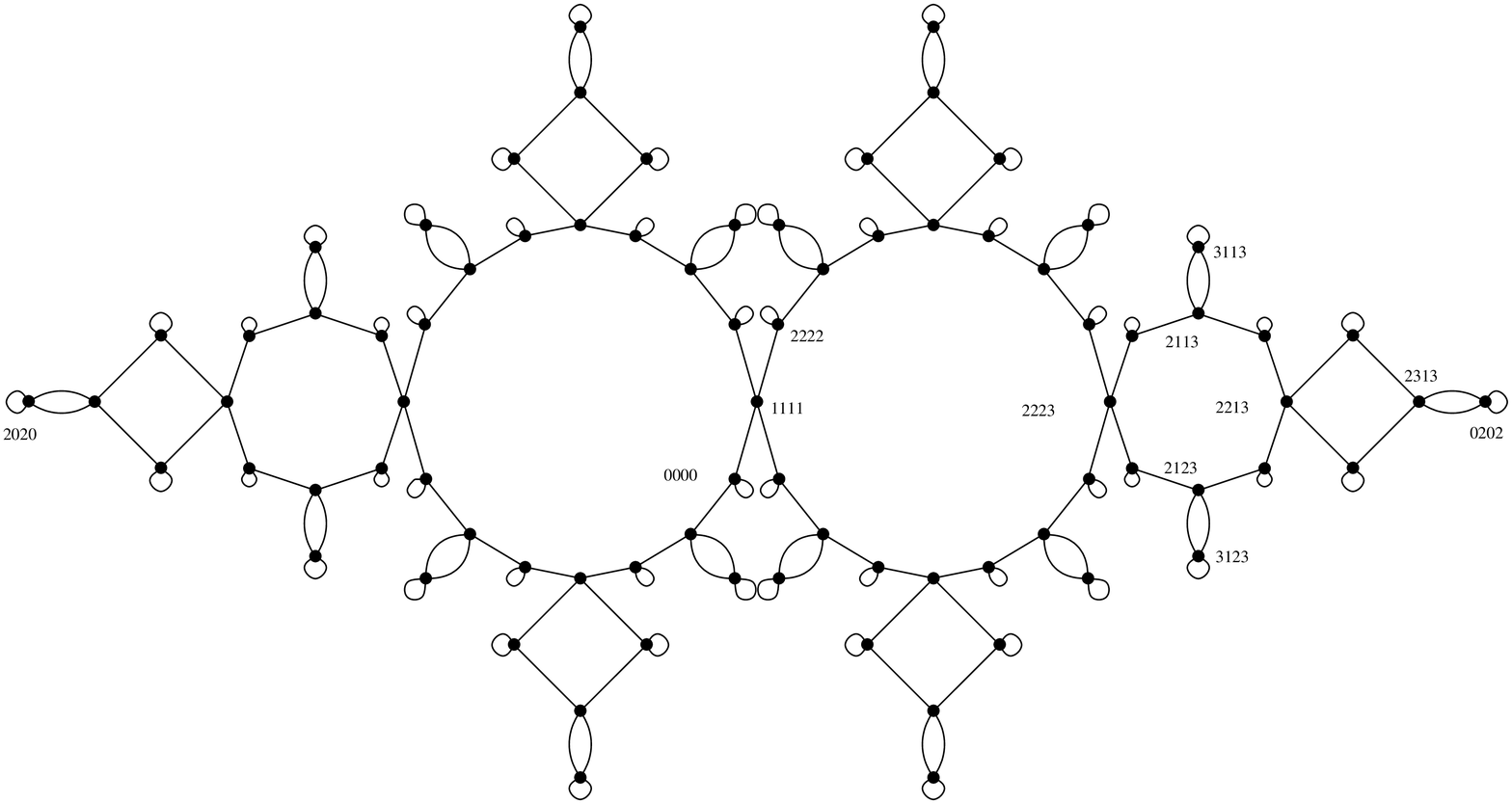}
\end{center}\caption{The Schreier graph $\Gamma_4^{P_3}$ of the Tangled odometer group.} \label{quattro}
\end{figure}
\end{example}

The Schreier graph $\Gamma_n^G$ only depends on the initial graph $G=(V,E)$ and not on the orientation of its edges, since the generating set that we let act on $V^n$ is supposed to be symmetric. The following lemma is well known \cite{fractal_gr_sets}.
\begin{lemma}\label{lemma_1}
$\Gamma_{n+1}^G$ is a covering of $\Gamma_{n}^G$ of degree $|V|$.
\end{lemma}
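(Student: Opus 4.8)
The plan is to exhibit the covering map explicitly. I would define $\pi\colon V^{n+1}\to V^{n}$ by deleting the last letter, i.e.\ $\pi(x_{1}\cdots x_{n}x_{n+1})=x_{1}\cdots x_{n}$; this is exactly the map sending a vertex of the $(n+1)$-st level of the regular rooted tree to its parent. Each fibre $\pi^{-1}(w)=\{wx : x\in V\}$ has $|V|$ elements, so $\pi$ is surjective and $|V|$-to-one, and it then remains only to check that $\pi$ is a graph covering, i.e.\ a graph morphism that is locally bijective on edge-germs.

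The single computation needed is the identity
\[
s(wx)=s(w)\,\mu(\lambda(s,w),x)\qquad\text{for all }s\in S,\ w\in V^{n},\ x\in V,
\]
which follows by an easy induction on $n$ from the recursive formulas \eqref{actionextended}: at each step one peels the leftmost letter off $w$ and uses $\lambda(s,x_{1}w')=\lambda(\lambda(s,x_{1}),w')$. Hence the word $s(wx)$ has $s(w)$ as its length-$n$ prefix, so $\pi(s(wx))=s(w)$, which is precisely $s$ applied to $\pi(wx)=w$. Thus, for every $s\in S$, the $s$-labelled edge of $\Gamma_{n+1}^{G}$ at $wx$ is mapped by $\pi$ onto the $s$-labelled edge of $\Gamma_{n}^{G}$ at $w$, so $\pi$ is a morphism of $S$-labelled Schreier graphs that respects the labels near each vertex.

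To finish, I would recall that in the Schreier graph of an action each vertex has exactly one incident edge-germ labelled $s$ for every $s\in S$. Since $\pi$ carries the $s$-edge-germ at $wx$ onto the $s$-edge-germ at $w=\pi(wx)$, it restricts, for each vertex $wx$, to a bijection from the edge-germs at $wx$ to those at $\pi(wx)$; this is exactly the condition that $\pi$ be a covering, and its degree is $|V|$ by the fibre count above. Loops and multi-edges, if present, cause no trouble: the same prefix identity shows $s(wx)=wx\Rightarrow s(w)=w$, so loops go to loops and parallel edges to parallel edges, and the statement survives passage to the underlying undirected unlabelled graphs.

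The only mathematical content is the displayed identity, and that is itself routine; the genuine difficulty is purely notational, namely verifying that the left-to-right recursion of \eqref{actionextended} is compatible with stripping the rightmost letter. Note that nothing about the specific automaton $\mathcal{A}_{G}$ is used here: the argument applies verbatim to the Schreier graphs of any invertible automaton, which is why the result is classical (cf.\ \cite{fractal_gr_sets}).
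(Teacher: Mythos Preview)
Your argument is correct and follows exactly the same idea as the paper's own proof, which is only a one-line sketch: the covering map is deletion of the last letter, and the key point is that if $vx$ and $wy$ are adjacent in $\Gamma_{n+1}^G$ then $v$ and $w$ are adjacent in $\Gamma_n^G$. You have simply filled in the details (the prefix identity and the local bijection on labelled edge-germs) that the paper omits.
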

\begin{proof}[Sketch of the Proof]
The basic idea is that, if $vx$ and $wy$ are adjacent vertices in $\Gamma_{n+1}^G$, with $v,w\in V^n$ and $x,y\in V$, then $v$ and $w$ are adjacent in $\Gamma_{n}^G$.
\end{proof}

\begin{proposition}
For every $n\geq 1$, the graph $\Gamma_{n}^G$ is connected if and only if $G$ is. In particular, the group $\mathcal{G}_G$ is spherically transitive if and only if $G$ is connected.
\end{proposition}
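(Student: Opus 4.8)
The plan is to rephrase the statement as a transitivity statement for the action of $\mathcal{G}_G$ on $V^n$. For a group acting on a set $M$ with symmetric generating set $S$, the graph $\Gamma(\mathcal{G}_G,S,M)$ is connected if and only if the action on $M$ is transitive: one implication is recalled in Section~\ref{sec: preliminaries}, and for the other a path joining two vertices of the graph exhibits a product of generators carrying one vertex to the other. Since $\Gamma_1^G$ coincides with $G$ as a simple graph, and the ``in particular'' assertion is just the conjunction of the main claim over all $n$, it suffices to show that $\mathcal{G}_G$ acts transitively on $V^n$ for all $n\geq 1$ if and only if $G$ is connected.

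For the implication ``$\Gamma_n^G$ connected $\Rightarrow$ $G$ connected'' I would argue contrapositively. If $G$ is disconnected then $\Gamma_1^G=G$ is a disconnected simple graph, and by Lemma~\ref{lemma_1} every $\Gamma_n^G$ is an iterated covering of $\Gamma_1^G$ with nonempty fibres; hence $\Gamma_n^G$ is disconnected, since the preimages of the connected components of $\Gamma_1^G$ partition it into at least two nonempty subgraphs with no edge between them. (Alternatively, if $G=G_1\sqcup\cdots\sqcup G_t$ with $t\geq 2$, then by Proposition~\ref{proppp} the generators of each $\mathcal{G}_{G_i}$ only interchange letters inside $V_i$, so $\mathcal{G}_G$ preserves, for each $w\in V^n$, the datum recording which coordinates of $w$ lie in which $V_i$; this datum is nonconstant on $V^n$, so the action is not transitive.)

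For the converse, assume $G$ connected. If $|E|\leq 1$ the claim is elementary: either $G$ is a single vertex and everything is trivial, or $G=P_2$ and $\mathcal{G}_G\cong\mathbb{Z}$ is generated by the adding machine, which is spherically transitive (its generator acts on $\{1,2\}^n$ as a single cycle of length $2^n$). Suppose $|E|\geq 2$. Then $\mathcal{G}_G$ is fractal by Theorem~\ref{teo_principale}(1), and it acts transitively on $V=V^1$ since $\Gamma_1^G=G$ is connected (concretely, the generator associated with an edge $\{x_i,x_j\}$ acts on $V$ as the transposition $(x_i\ x_j)$, and the transpositions indexed by the edges of a connected graph generate a transitive subgroup of $Sym(V)$). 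From here I would run the standard induction on $n$: assuming transitivity on $V^n$, take $xa,yb\in V^{n+1}$ with $x,y\in V$ and $a,b\in V^n$; pick $g\in\mathcal{G}_G$ with $g(x)=y$, so $g(xa)=y\,a'$ with $a'=\lambda(g,x)(a)\in V^n$; pick $k\in\mathcal{G}_G$ with $k(a')=b$ by the inductive hypothesis; and, using fractalness, pick $h\in Stab_{\mathcal{G}_G}(V)$ with $\lambda(h,y)=k$, so that $h(y\,a')=y\,k(a')=yb$ and hence $(hg)(xa)=yb$. Thus $\mathcal{G}_G$ is transitive on every $V^n$, so every $\Gamma_n^G$ is connected and $\mathcal{G}_G$ is spherically transitive.

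The main obstacle is the converse direction: connectedness does not in general pass to covering graphs, so Lemma~\ref{lemma_1} alone does not propagate connectedness up the levels, and the essential input is the fractalness of $\mathcal{G}_G$ from Theorem~\ref{teo_principale}(1), which powers the level-by-level induction. What remains is routine bookkeeping of the restrictions $\lambda(\cdot,\cdot)$ in the self-similar representation, plus the separate easy treatment of the graphs with at most one edge, to which Theorem~\ref{teo_principale} does not directly apply.
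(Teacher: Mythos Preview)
Your proposal is correct and follows essentially the same approach as the paper: both directions hinge on the equivalence between connectedness of $\Gamma_n^G$ and transitivity of $\mathcal{G}_G$ on $V^n$, with the forward direction driven by fractalness (Theorem~\ref{teo_principale}(1)) plus induction on the level, and the backward direction by a direct obstruction when $G$ is disconnected. Your treatment is in fact slightly more careful than the paper's, since you explicitly handle the case $|E|\leq 1$ (the paper invokes Theorem~\ref{teo_principale}, whose hypothesis is $|E|\geq 2$, without separating out the Adding machine) and you spell out the inductive step rather than leaving it as a ``standard inductive argument''; for the converse you use the covering Lemma~\ref{lemma_1}, whereas the paper simply observes that $x^n$ and $y^n$ cannot be joined when $x,y$ lie in different components of $G$, but both arguments are straightforward.
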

\begin{proof}
Let us start by proving that, if $G$ is connected, then $\Gamma_{n}^G$ is connected for any $n\geq 1$. The connectedness of $G$ implies that $\mathcal{G}_G$ acts transitively on $V$. By Theorem \ref{teo_principale}, the group $\mathcal{G}_G$ is fractal and it is a standard inductive argument to show that these properties imply the transitivity of $\mathcal{G}_G$ on $V^n$.\\
\indent Conversely, if $x,y\in V$ are not in the same connected component of $G$, then there is no way to connect the vertices $x^n$ and $y^n$ in $\Gamma_{n}^G$.
\end{proof}
We are going to show how cut-vertices in $G$ propagate in the Schreier graphs $\Gamma_{n}^G$. Recall that a \emph{cut-vertex} of a graph is a vertex whose deletion increases the number of connected components of the graph (see, for instance, \cite{bollobas}).
Observe that, for our purposes, when we delete a cut-vertex from a graph we do not remove the edges which are incident to that vertex. We need some technical preparation.\\
\noindent Let $x$ be a cut-vertex of $G$, whose deletion disconnects $G$ into $c$ connected components $G_1, \ldots, G_c$ with $G_i=(V_i,E_i)$ for each $i=1,\ldots,c$. If a vertex $v$ of $V^n$ has the form $v=x^kyv'$, with $k\geq 1$, and $y \in V_i$, then a special subgraph of $\Gamma_{n}^G$ associated with the vertex $v$ can be constructed as follows.
\begin{itemize}
\item Let $E\setminus E_i$ act on $v$. Since only edges not belonging to $E_i$ are acting, this action can only change the prefix $x^k$ of $v$, but the suffix $yv'$ remains unchanged. Moreover, each edge $e\in E\setminus E_i$ generates an Adding machine, so that its action on $v$ produces an orbit which is a cyclic graph whose length is a power of $2$, and in particular it contains $v$ again. Let us denote by $X_{v,0}$ the orbit of $v$ under $E\setminus E_i$. Then put $Y_{v,0} =X_{v,0}\setminus \{v\}$.
\item Let $E_i$ act on $Y_{v,0}$ and get the set $X_{v,1}$. Concretely, we are appending new cycles of length a power of $2$ to the vertices contained in the cycles constructed at the previous step. Then put $Y_{v,1}= X_{v,1}\setminus \{v\}$.
\item Let $E\setminus E_i$ act $Y_{v,1}$ and get the set $X_{v,2}$. Then put $Y_{v,2}= X_{v,2}\setminus \{v\}$.
\end{itemize}
Continue in this way by alternating the action of generators in $E\setminus E_i$ and $E_i$; in this way, we construct an increasing sequence $Y_{v,m}\subseteq Y_{v,m+1}$. Since our alphabet is finite, after a finite number of steps, the sequence of sets $Y_{v,m}$ stabilizes to a set $Y_v$. Let $D_v$ be the graph induced by $Y_v$ (in particular, $D_v$ contains the vertex $v$ itself). We call the corresponding subgraph $D_v$ of $\Gamma_{n}^G$ the \emph{decoration} of $v$ in $\Gamma_{n}^G$.

\begin{rem}\label{AA}\rm
If $v=x^kyv'$ then the decoration $D_v$ is isomorphic to the subgraph of $\Gamma_{k}^G$ obtained by considering the alternate action of $E\setminus E_i$ and $E_i$ on $x^k$ as described above. In fact, in each vertex of $D_v$ the suffix $yv'$ remains unchanged. In particular, the isomorphism is obtained by deleting the last $n-k$ letters of each vertex of $D_v$. Moreover if the vertex $x$ is a leaf in $G$, that is, it has only one adjacent vertex, then it gives rise in $\Gamma_{n}^G$ to special cut-vertices which separate a component that is a loop.
\end{rem}

\begin{example}\rm
Consider the Schreier graph $\Gamma_4^{P_3}$ of Fig. \ref{quattro}, where $G=P_3$ is the path on $3$ vertices (as represented in Fig. \ref{automatonP3}). Take the vertex $v=2223$, where $x=2$ disconnects the path $P_3$ into the two components $(V_1,E_1) = (\{1\}, \{a\})$ and $(V_2,E_2)= (\{3\},\{b\})$. In particular, $y=3\in V_2$. Let us construct the decoration of $v$. We let the generator $a$ act on $v$ obtaining the $8$-cycle on the right of $v$. Now we let $b$ act on all vertices of this cycle different from $v$: we obtain two $2$-cycles attached to the vertices $2113$ and $2123$, a $4$-cycle attached to $2213$, together with four loops attached to the remaining vertices of the $8$-cycle. Then we let $a$ act again and we obtain: two loops attached to the vertices $3113$ and $3123$, two loops attached to two vertices of the $4$-cycle, and the $2$-cycle containing $2313$ and $1313$. We conclude by letting $b$ act, what produces the loop attached to the rightmost vertex $1313$.\\
\indent Roughly speaking, the decoration of $v$ is the subgraph of $\Gamma_4^{P_3}$ containing $v$ and all the vertices on the right of $v$. Notice that such a subgraph is isomorphic to the subgraph of $\Gamma_3^{P_3}$ (Fig. \ref{primitre}) obtained by taking the vertex $222$ and all the vertices on its left.
\end{example}

Notice that by construction every decoration $D_v$ corresponds to a subgraph that can be disconnected from the Schreier graph just by removing the vertex $v$. What said above can be summarized in the following result.

\begin{proposition}\label{cut}
Let $x$ be a cut-vertex for $G$. Then, for each $n\geq 2$, the vertex $v=xw\in V^n$ is a cut-vertex in $\Gamma_{n}^G$ for every $w\in V^{n-1}$, and it separates the decoration $D_v$ from the remaining part of the graph $\Gamma_n^G$.
\end{proposition}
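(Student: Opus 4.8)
The plan is to show that deleting $v$ from $\Gamma_n^G$ detaches exactly the vertex set $Y_v$ of the decoration $D_v$, and that both $Y_v$ and its complement $V^n\setminus X_v$ are nonempty; since $\Gamma_n^G$ is connected whenever $G$ is (shown above), this makes $v$ a cut-vertex whose removal separates $D_v$ from the rest. Throughout I write $v=x^kyv'$ with $k\ge 1$ and $y\in V_i$ (so $v\neq x^n$), which is exactly the situation in which the decoration $D_v$, the subgraph of $\Gamma_n^G$ induced on $X_v=Y_v\cup\{v\}$, was constructed above.

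First I would establish that no edge of $\Gamma_n^G$ joins a vertex of $Y_v$ to a vertex of $V^n\setminus X_v$. The point is that the sets $Y_{v,0}\subseteq Y_{v,1}\subseteq\cdots$ increase inside the finite set $V^n$ and hence stabilize to $Y_v$ after finitely many steps; once stable, $X_v$ is left unchanged both by ``acting with $E_i$ on $Y_v$'' and by ``acting with $E\setminus E_i$ on $Y_v$'', which are the two alternating operations of the construction, so that $e(u)\in X_v$ for every generator $e$ and every $u\in Y_v$. The hard part will be to make this ``by construction'' precise: one must check that the families $E_i$ and $E\setminus E_i$ rewrite disjoint portions of a word in $X_v$ — an edge outside $E_i$ only alters the $x$-prefix in front of the fixed suffix $yv'$, while an edge of $E_i$ only alters letters lying in $V_i\cup\{x\}$ — which is precisely what makes the alternation of the two closure operations terminate at a set invariant under all of $E$. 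This is essentially the content of Remark~\ref{AA}, and it is also the reason the construction must alternate rather than apply all generators at once. Granting it, $Y_v$ is a union of connected components of $\Gamma_n^G\setminus\{v\}$.

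It then remains to verify the two nonemptiness claims. For $Y_v\neq\emptyset$ I would use that $x$, being a cut-vertex, has a neighbour in each component $G_1,\ldots,G_c$ (with $c\ge 2$); choosing $j\neq i$ and a neighbour $y'\in V_j$ of $x$, the generator $e=(x,y')\in E\setminus E_i$ turns the leading letter $x$ of $v$ into $y'\neq x$, so $e(v)\in Y_{v,0}\subseteq Y_v$ is distinct from $v$. For $V^n\setminus X_v\neq\emptyset$ I would again invoke Remark~\ref{AA}: every vertex of $D_v$ carries $yv'$ as its length-$(n-k)$ suffix, so its letter in position $k+1$ is $y\neq x$, whence $x^n\notin X_v$; and since $\Gamma_n^G$ is connected with $x^n\neq v$, the vertex $x^n$ lies in a component of $\Gamma_n^G\setminus\{v\}$ other than those constituting $Y_v$. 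Combining the three steps, $\Gamma_n^G\setminus\{v\}$ has vertex set $Y_v\sqcup(V^n\setminus X_v)$ with no edge across it, hence is disconnected while $\Gamma_n^G$ is not, so $v$ is a cut-vertex; and deleting $v$ leaves the decoration $D_v$ (now the union of the components forming $Y_v$) separated from the remainder of $\Gamma_n^G$, as claimed.
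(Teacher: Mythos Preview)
Your argument is correct and follows precisely the line the paper takes: the paper's ``proof'' is the single sentence ``Notice that by construction every decoration $D_v$ corresponds to a subgraph that can be disconnected from the Schreier graph just by removing the vertex $v$,'' and what you have written is a careful unpacking of that sentence. Your three ingredients --- stabilization of the increasing sequence $Y_{v,m}$ in the finite set $V^n$ giving closure of $Y_v$ under both families $E_i$ and $E\setminus E_i$ modulo $\{v\}$, nonemptiness of $Y_v$ via a neighbour of $x$ in a different component, and nonemptiness of the complement via Remark~\ref{AA} and the vertex $x^n$ --- are exactly what the construction is designed to deliver.

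Two small remarks. First, the passage about ``disjoint portions'' is not actually needed for your Step~1: once an increasing sequence in a finite set is eventually constant, it is constant for all large indices of \emph{both} parities, and that alone gives invariance of $X_v$ under both edge families. Second, you are right to flag the hypothesis $v\neq x^n$, since the decoration $D_v$ is only defined in that case; the paper's statement ``for every $w\in V^{n-1}$'' is slightly loose on this point, and your proof covers exactly the cases for which $D_v$ is constructed.
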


\begin{rem} \rm
There exists a connection between our construction and a special class of graph products, which allows to give a purely combinatorial construction of the Schreier graphs of graph automaton groups. Actually, this description was our first attempt in defining the graphs $\Gamma_{n}^G$ without observing that $\Gamma_{n}^G$ arises from the action of an automaton group. The construction was inspired by the definition of Sierpi\'{n}ski graphs (see, for instance, \cite{sier} and bibliography therein), although in order to generate the latter by automata we should use a partial automaton.\\
\indent Let $G=(V,E)$ be a finite graph. One can define the $n$-th \emph{automaton power} $G^n_a$ of $G$ as the graph with vertex set $V^n$ and edge set consisting of pairs of vertices of type
\begin{equation}\label{regole}
\{x^tyw, y^txw \}  \qquad \{x^tzw,y^tzw\}   \qquad \{x^n,y^n\}
\end{equation}
where $\{x,y\}\in E$ and $z\neq x,y$, with $t=0,1,\ldots, n-1$ and $|w|=n-t-1$.\\
\indent It is easy to check that the edges described above are exactly the edges of $\Gamma_{n}^G$. In fact the connections described by (\ref{regole}) precisely correspond to the action of the states of the automaton $\mathcal{A}_G$ on $V^n$. Therefore, the graphs $\Gamma_{n}^G$ and $G^n_a$ are isomorphic.
\end{rem}

\subsection{Automorphisms of Schreier graphs of a graph automaton group}\label{subauto}

In this subsection we are going to investigate the relation between the automorphisms of the Schreier graph $\Gamma_{n}^G$ and the automorphisms of the initial graph $G=(V,E)$. Observe that Proposition \ref{cut} implies that any nontrivial automorphism of a decoration $D_v$ in $\Gamma_{n}^G$ is a nontrivial automorphism of $\Gamma_{n}^G$ fixing $v$. This observation will lead us to the description of the full automorphism group of the Schreier graphs associated with path graphs (Section \ref{paths}). We start with an extension result.
\begin{proposition}\label{ind}
Let $\phi$ be an automorphism of $G=(V,E)$, with $V=\{x_1, x_2, \ldots, x_k\}$. Then $\phi_n: \Gamma_{n}^G\to \Gamma_{n}^G$ defined by
$$
\phi_n(x_{i_1}\cdots x_{i_n})=\phi(x_{i_1})\cdots \phi(x_{i_n})
$$
is an automorphism of $\Gamma_{n}^G$.
\end{proposition}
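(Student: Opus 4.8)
The plan is to verify directly that the bijection $\phi_n$ of $V^n$ induced by applying $\phi$ letterwise sends edges of $\Gamma_n^G$ to edges of $\Gamma_n^G$; since $\phi_n$ is clearly a bijection (its inverse is $(\phi^{-1})_n$), this suffices to prove it is a graph automorphism. Because the Schreier graph is unlabeled and undirected, I only need to show that adjacency is preserved. I would use the combinatorial description of the edge set of $\Gamma_n^G \cong G_a^n$ recorded just above: every edge has one of the three forms
$$
\{x^t y w,\, y^t x w\},\qquad \{x^t z w,\, y^t z w\},\qquad \{x^n,\, y^n\},
$$
where $\{x,y\}\in E$, $z\neq x,y$, $0\le t\le n-1$ and $|w| = n-t-1$. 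The point is that each such edge is entirely determined by a single edge $\{x,y\}$ of $G$ together with a choice of $t$, $w$, and (in the middle case) a letter $z$ distinct from $x,y$.

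The key step is then a case check. Applying $\phi_n$ to the first type of edge yields the pair $\{\phi(x)^t \phi(y)\, \phi_n(w),\, \phi(y)^t \phi(x)\, \phi_n(w)\}$; since $\phi$ is a graph automorphism, $\{\phi(x),\phi(y)\}\in E$, and $\phi_n(w)\in V^{n-t-1}$, so this is again an edge of $\Gamma_n^G$ of the first type. The third type is handled identically with $w$ empty. For the middle type, applying $\phi_n$ gives $\{\phi(x)^t \phi(z)\, \phi_n(w),\, \phi(y)^t \phi(z)\, \phi_n(w)\}$; here I must observe that $\phi$ being injective forces $\phi(z)\neq \phi(x),\phi(y)$, so the pair genuinely has the required form with the new "third letter" $\phi(z)$, and again $\{\phi(x),\phi(y)\}\in E$. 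Thus $\phi_n$ maps every edge to an edge, and applying the same argument to $\phi^{-1}$ shows $\phi_n^{-1}=(\phi^{-1})_n$ also preserves edges, so $\phi_n\in \mathrm{Aut}(\Gamma_n^G)$.

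The only mild subtlety — the "main obstacle", such as it is — is making sure that the three edge types are genuinely exhausted by the displayed list (so that checking these three cases is a complete argument) and that injectivity of $\phi$ is invoked in the middle case to keep $\phi(z)\notin\{\phi(x),\phi(y)\}$; without that, one might worry that a "type-two" edge could collapse to a "type-one" edge under $\phi_n$, but since $\phi_n$ is a bijection this causes no problem and in any case $\phi(z)\neq\phi(x),\phi(y)$ rules it out. Alternatively, if one prefers not to rely on the $G_a^n$ description, the same verification can be carried out directly from the definition of the automaton action: if $s(v)=v'$ in $\Gamma_n^G$ for a generator $s=e$ corresponding to an edge $\{x,y\}$, one checks that $\phi_n(v')$ is obtained from $\phi_n(v)$ by the action of the generator corresponding to the edge $\{\phi(x),\phi(y)\}$, using the explicit formulas for $\lambda$ and $\mu$ together with the extension rule \eqref{actionextended} and a straightforward induction on $n$. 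Either route is routine; I would present the first.
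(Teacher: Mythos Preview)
Your proposal is correct and follows essentially the same route as the paper: the paper also invokes the three edge types from Eq.~\eqref{regole}, observes that $\phi_n$ is a bijection, and verifies adjacency preservation case by case (writing out only the first case and declaring the others analogous). You are in fact slightly more careful than the paper in making explicit the role of injectivity of $\phi$ in the middle case and in noting that $(\phi^{-1})_n$ also preserves edges.
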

\begin{proof}
First of all, notice that the map $\phi_n$ is a bijection by definition. We have to prove that, if $v$, $v'\in V^n$ are adjacent vertices in $\Gamma_{n}^G$, then $\phi_n(v)$ and $\phi_n(v')$ are adjacent too. \\
The adjacency in the graph $\Gamma_{n}^G$ are described by the rules from Eq. \eqref{regole}. We have three possibilities: either $v=x^tyw$ and  $v'=y^txw$, or $v=x^tzw$ and $v'=y^tzw$, or $v=x^n$ and $v'=y^n$, where $\{x,y\}\in E$ and $z\neq x,y$, with $t=0,1,\ldots, n-1$ and $|w|=n-t-1$. Since the vertices $v$ and $v'$ are supposed to be adjacent in $\Gamma_{n}^G$ then in all the three cases described above the vertices $x$ and $y$ are adjacent in $G$. This implies that $\phi(x)$ and $\phi(y)$ are adjacent in $G$, because $\phi$ is an automorphism of $G=(V,E)$. If $v=x^tyw$ and $v'=y^txw$, then
$$
\phi_n(v)=\phi(x)^t\phi(y)\phi_{n-t-1}(w), \ \ \phi_n(v')=\phi(y)^t\phi(x)\phi_{n-t-1}(w);
$$
it follows from Eq. \eqref{regole} that $\phi_n(v)$ and $\phi_n(v')$ are adjacent. The other two cases can be treated analogously.
\end{proof}
When the graph $G$ is cyclic, we can prove that actually all automorphisms of $\Gamma_{n}^G$ are of this type. Let us denote by $D_{2k}$ the dihedral group of $2k$ elements.

\begin{theorem}\label{cicli}
Let $C_k$ be the cyclic graph on $k$ vertices. Then
$$
Aut(\Gamma_{n}^{C_k})\cong Aut(C_k)\cong D_{2k}, \qquad \mbox{for each } n\geq 1.
$$ \end{theorem}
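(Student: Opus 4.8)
The statement has two parts. The second isomorphism $Aut(C_k)\cong D_{2k}$ is classical, and our standing hypotheses (simple, connected) force $k\geq 3$, so there is nothing to prove there. For the first isomorphism the plan is to combine the extension result of Proposition \ref{ind} with a rigidity argument, by induction on $n$. The easy inclusion is immediate: by Proposition \ref{ind} every $\phi\in Aut(C_k)$ produces $\phi_n\in Aut(\Gamma_{n}^{C_k})$, and $\phi\mapsto\phi_n$ is an injective homomorphism since $\phi_n$ sends the constant word $x_i^n$ to $\phi(x_i)^n$, so $\phi_n$ is the identity only if $\phi$ is. Hence $D_{2k}\cong Aut(C_k)\hookrightarrow Aut(\Gamma_{n}^{C_k})$, and it remains to prove $|Aut(\Gamma_{n}^{C_k})|\leq 2k$. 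For $n=1$ this is classical, since $\Gamma_{1}^{C_k}$ is $C_k$ with, at each vertex, the same number of loops (coming from the non-incident edges), which do not affect the automorphism group.

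For the inductive step I would first record the rigid shape of $\Gamma_{n}^{C_k}$, reading it off the self-similar forms $e_x=(id,\ldots,e_x,\ldots,id)(x,x+1)$, with $V=\{1,\ldots,k\}$ ordered cyclically and $e_x$ joining $x$ and $x+1$. One checks: (i) the vertex set splits into blocks $B_x=\{x\}\times V^{n-1}$ according to the first letter, all edges inside a block are loops, and every non-loop edge joins two consecutive blocks $B_x$ and $B_{x+1}$; (ii) the subgraph induced between $B_x$ and $B_{x+1}$ arises entirely from the single generator $e_x$ acting on $V^{n-1}$, hence is a disjoint union of even cycles whose lengths are twice the $e_x$-orbit lengths on $V^{n-1}$; (iii) among these there is a unique longest cycle $L_x$, of length $2^n$, consisting of all words over the two-letter alphabet $\{x,x+1\}$; it contains the constant words $x^n$ and $(x+1)^n$, which are adjacent on $L_x$ (as $e_x(x^n)=(x+1)^n$), with $L_x\cap L_{x+1}=\{(x+1)^n\}$ and $L_x\cap L_y=\emptyset$ for non-consecutive $x,y$.

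Now let $\alpha\in Aut(\Gamma_{n}^{C_k})$. The key step is to show that $\alpha$ permutes the distinguished family $\{L_1,\ldots,L_k\}$ respecting the relation ``$L_x$ and $L_y$ share a vertex''. By (iii) this relation makes $\{L_1,\ldots,L_k\}$ into a copy of $C_k$, so $\alpha$ induces some $\phi\in Aut(C_k)=D_{2k}$ with $\alpha(B_x)=B_{\phi(x)}$, $\alpha(L_x)=L_{\phi(x)}$ and $\alpha(x^n)=\phi(x)^n$. Replacing $\alpha$ by $\phi_n^{-1}\alpha$ we may assume $\alpha$ fixes every block, every $L_x$, and every constant word $x^n$. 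Since $L_x$ is a cycle of length $2^n>2$ carrying the two fixed adjacent vertices $x^n$ and $(x+1)^n$, and a cycle has no nontrivial automorphism fixing two adjacent vertices, $\alpha$ fixes $\bigcup_x L_x$ pointwise. It remains to pin $\alpha$ down on the other vertices, which I would do by a descending induction on the lengths of the cross-cycles between consecutive blocks, each such cycle being rigidly attached through its block-to-block neighbours to vertices already shown to be fixed (in effect, one verifies that at this stage $\alpha$ descends along the covering $\Gamma_{n}^{C_k}\to\Gamma_{n-1}^{C_k}$ of Lemma \ref{lemma_1} to an automorphism of $\Gamma_{n-1}^{C_k}$ fixing all constant words, hence trivial by the inductive hypothesis). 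This yields $\alpha=id$, so $|Aut(\Gamma_{n}^{C_k})|\leq 2k$ and the proof is done.

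The main obstacle is precisely the assertion that an arbitrary graph automorphism of $\Gamma_{n}^{C_k}$ respects the block decomposition, i.e. permutes the cycles $L_x$. The graph is locally homogeneous — every vertex carries the same number of loops and the same non-loop degree — so neither the blocks nor the $L_x$ are detectable near a single vertex, and one has to isolate a genuinely global invariant singling out the $L_x$ (for instance, by characterizing them among all cycles of $\Gamma_{n}^{C_k}$ through their length together with their incidence pattern with the shortest cycles and the multi-edges). This is where the bulk of the work goes; the concluding ``peeling'' step is comparatively routine, but needs some bookkeeping because at each intermediate length several cross-cycles may join the same pair of consecutive blocks.
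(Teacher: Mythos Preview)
Your overall architecture matches the paper's exactly: inject $Aut(C_k)$ into $Aut(\Gamma_n^{C_k})$ via Proposition \ref{ind}, show that every automorphism permutes a distinguished finite configuration modelled on $C_k$, compose with an induced automorphism to assume that configuration is fixed, and then finish by induction on $n$. The genuine difference is in the choice of the distinguished configuration. You aim at the family of long cycles $L_x$ (the $2^n$-cycles given by the single-generator orbits on $\{x,x+1\}^n$) and recover the constant words $x^n$ as their pairwise intersections. The paper instead isolates the set $X=\{1^n,\ldots,k^n\}$ \emph{directly}, using the short cycles: it proves that a vertex of $\Gamma_n^{C_k}$ is the unique common vertex of two copies of $C_k$ (two $k$-cycles) if and only if it lies in $X$. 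This is established by an explicit case analysis on words $i^s(i\pm1)w$ and $i^szw$, showing that no vertex outside $X$ has four neighbours organising into two $k$-cycles. Once $X$ is shown to be $\varphi$-invariant, the paper composes with the automorphism induced by $\varphi|_X\in Aut(C_k)$ and reduces to $\varphi$ fixing $X$ pointwise; the induction then runs through the last-letter decomposition $G_i=\{wi:w\in V^{n-1}\}$ into $k$ glued copies of $\Gamma_{n-1}^{C_k}$.

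Your route is workable, but the step you yourself flag as the main obstacle is harder than the paper's: the $L_x$ are not singled out by length alone (already for $k=4$, $n=2$ the $k$-cycles and the $L_x$ are all $4$-cycles), so extra incidence data really is needed, whereas the paper's ``unique common vertex of two $k$-cycles'' criterion is intrinsic and verifiable by inspecting a bounded neighbourhood. A small further mismatch: the covering of Lemma \ref{lemma_1} and the paper's inductive copies $G_i$ are governed by the \emph{last} letter, while your blocks $B_x$ are by the \emph{first} letter; your parenthetical remark about $\alpha$ descending along the covering therefore does not align with your block picture, and the concluding induction is cleaner if you pass to the last-letter copies as the paper does.
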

\begin{proof}
Observe that for $n=1$ the statement is obvious by definition of $\Gamma_{1}^{C_k}$. Therefore, we assume $n\geq 2$.
Let $V=\{1,\ldots, k\}$ be the vertex set of $C_k=(V,E)$. Observe that in $\Gamma_{n}^{C_k}$ one has $k^{n-1}$ sets of vertices of type $V_w^k=\{1w,\ldots, kw\}$, with $w\in V^{n-1}$, each yielding a copy of $C_k$ in $\Gamma_{n}^{C_k}$. This property can be obtained by taking $t=0$ in the rule $\{x^tyw, y^txw\}$ described in Eq. \eqref{regole}. Moreover, one has a copy of $C_k$ consisting of the vertices labeled by $\{1^n,2^n,\ldots, k^n\}$, according to the rule $\{x^n,y^n\}$ in Eq. \eqref{regole}.\\
\indent Let $\varphi$ be an automorphism of $\Gamma_{n}^{C_k}$. First of all, we want to prove that the set $X=\{1^n,\ldots, k^n\}$ is invariant under the action of $\varphi$. To prove that, we show that a vertex of $\Gamma_{n}^{C_k}$ is the unique common vertex of two copies of $C_k$ if and only if it belongs to $X$.

In order to avoid heavy notation, we omit here and in the sequel to specify every time that sums and differences must be taken modulo $k$. We denote by $e_i$ the generator associated with the edge $(i,i+1)$ of $C_k$.
It follows from Eq. \eqref{regole} that the neighbours of $i^n$ in $\Gamma_{n}^{C_k}$ are exactly the vertices $(i+1)i^{n-1}$, $(i-1)i^{n-1}$ (together with $i^n$, they belong to the copy of $C_k$ associated with $V_{i^{n-1}}^k$) and the vertices $(i-1)^n$, $(i+1)^n$ (together with $i^n$, they belong to the copy of $C_k$ given by $X$). Therefore, for $n \geq 2$, the vertices of $X$ have the required property. We have to prove that no other vertex has this property.

Notice that a vertex of type $x^tyw$ or $x^tzw$, with $t=1$ and $\{x,y\}\in E$, $\{x,z\}\not\in E$, has less than four distinct neighbours according to Eq. \eqref{regole}, and so it cannot be the unique common vertex of two copies of $C_k$. Hence, for $n=2$, our characterization of $X$ is proved. From now, we assume $n\geq 3$ and we only consider vertices of type $x^tyw$ or $x^tzw$ for $2\leq t\leq n-1$.

We want to prove that a vertex of this type cannot be the unique common vertex of two copies of $C_k$, although it has four distinct adjacent vertices in $\Gamma_{n}^{C_k}$.\\
\indent Let us start by considering words of type $i^s(i\pm 1)w$, with $s\geq 2$ and $w$ possibly empty (i.e., words starting with a block of $i$'s followed by a  neighbour of $i$).\\
\indent Let us focus our attention on the case $i^s(i+ 1)w$ (the case $i^s(i-1)w$ is analogous). Its distinct neighbours are:
\begin{itemize}
\item $(i+1)i^{s-1}(i+1)w$, via the action of $e_i^{-1}$, and $(i-1)i^{s-1}(i+1)w$, via the action of $e_{i-1}$ (together with the vertex $i^s(i+ 1)w$, they both belong to the copy of $C_k$ associated with $V_{i^{s-1}(i+1)w}^k$);
\item $(i+1)^siw$, via the action of $e_i$, and $(i-1)^s(i+1)w$, via the action of $e_{i-1}^{-1}$.
\end{itemize}
We claim that $i^s(i+ 1)w$, $(i+1)^siw$ and $(i-1)^s(i+1)w$ do not belong together to a copy of $C_k$. In particular we claim that we cannot start from $i^s(i+ 1)w$ , pass to $(i+1)^siw$ and go back in $k-1$ steps to $i^s(i+ 1)w$ passing in the last step through $(i-1)^s(i+1)w$ by avoiding vertex repetitions.\\
\indent Notice that by applying $e_i$ to $i^s(i+ 1)w$ we get $(i+1)^siw$. Such a vertex belongs to the copy of $C_k$ corresponding to $V_{(i+1)^{s-1}iw}^k$ (its neighbours in this copy of $C_k$ are obtained by applying the generators $e_{i+1}^{-1}$ and $e_{i}$). The other cycle to which the vertex should belong must be obtained by applying the generator $e_{i+1}$ and $e_{i}^{-1}$. By using the latter we go back to $i^s(i+ 1)w$, by using the former we get $(i+2)^siw$. In the same way by applying $k-2$ times the generators we get a sequence of vertices until we get $(i-1)^siw$. However, this vertex is not equal to the vertex $(i-1)^s(i+1)w$, which was supposed to be the last vertex in our copy of $C_k$. Hence we cannot go back in $k$ steps to $i^s(i+ 1)w$.

\indent Consider now vertices of type $i^szw$, with $s\geq 2$, $z\neq (i\pm 1)$ and $w$ possibly empty (i.e., words starting with a block of $i$'s followed by a letter non adjacent to $i$ in $C_k$). It can be easily seen that such vertices have four distinct neighbours: the vertices $(i\pm 1)i^{s-1}zw$ (together with $i^szw$, they belong to the copy of $C_k$ associated with $V_{i^{s-1}zw}^k$), and the vertices $(i\pm 1)^szw$. For the latter pair of vertices, one can use the same argument as above to show that they cannot live, together with $i^szw$, in a copy of $C_k$.\\
\indent We have thus proved our characterization of $X$ for each $n$, which implies that $X$ is invariant under the action of $\varphi$.

Now suppose that $X$ is fixed (not only invariant) by $\varphi$. It is possible to prove by induction on $n$ that in this case $\varphi$ is the trivial automorphism. The key idea is that the graph $\Gamma_{n}^{C_k}$ is obtained by gluing together, in a suitable way, $k$ copies of the graph $\Gamma_{n-1}^{C_k}$, each consisting of the vertex set $G_i = \{wi: w \in V^{n-1}\}$, with $i=1,\ldots, k$.\\
\indent We have to consider now the case where $X$ is not fixed by $\varphi$. Let us denote by $\varphi_X$ the automorphism of $C_k$ defined by putting $\varphi_X(i) = j$ if $\varphi(i^n)=j^n$. Now, let $\psi$ be the automorphism of $\Gamma_{n}^{C_k}$ induced by $\varphi_X$ as in Proposition \ref{ind}. By definition, the composition of $\varphi$ and $\psi^{-1}$ gives an automorphism of $\Gamma_{n}^{C_k}$ fixing $X$. By the previous discussion, we get $\varphi=\psi$, and the thesis follows.
\end{proof}
From a geometric point of view inherited from $C_k$ via Proposition \ref{ind}, a nontrivial automorphism of $\Gamma_{n}^{C_k}$ is a composition of reflections around the axis of the path connecting vertices of type $i^n$ and $(i+1)^n$, with rotations by $2\pi/k$.
\begin{example}\rm
In Fig. \ref{ciccic} the graphs $\Gamma_2^{C_3}$ (on the left), $\Gamma_3^{C_3}$ (in the middle) and $\Gamma_2^{C_4}$ (on the right) are depicted (notice that the automaton associated with the graph $C_3$ is represented in Case (3) of Example \ref{esempibase}). It can be easily seen that $Aut(\Gamma_2^{C_3})=Aut(\Gamma_2^{C_3})=D_6$ and $Aut(\Gamma_2^{C_4}) = D_8$. Looking, for instance, at the copy $G_2$ of $\Gamma_2^{C_3}$ contained in $\Gamma_3^{C_3}$, we see that when passing from the level $2$ to the level $3$ the edge $\{22,11\}$ (resp. $\{22,33\}$) produces the edges $\{221,112\}$ and $\{222,111\}$ (resp. $\{223,332\}$ and $\{222,333\}$) connecting the copy $G_2$ with the copy $G_1$ (resp. $G_3$); on the other hand, the edges $\{222,112\}$ and $\{222,332\}$ do not appear in $\Gamma_3^{C_3}$.
\begin{center}
\begin{figure}[h]
\begin{center}
\tiny
\psfrag{00}{$11$}\psfrag{11}{$22$}
\psfrag{22}{$33$}\psfrag{33}{$44$}
\psfrag{000}{$111$} \psfrag{111}{$222$}\psfrag{222}{$333$}

\psfrag{002}{$113$}\psfrag{220}{$331$}
\psfrag{110}{$221$}\psfrag{001}{$112$}
\psfrag{221}{$332$} \psfrag{112}{$223$}

\psfrag{a}{$33$}\psfrag{b}{$13$}\psfrag{c}{$31$}\psfrag{d}{$11$}\psfrag{e}{$21$}\psfrag{f}{$12$}\psfrag{g}{$22$}\psfrag{h}{$32$}
\psfrag{i}{$23$}
\psfrag{G1}{$G_1$}\psfrag{G2}{$G_2$}\psfrag{G3}{$G_3$}

\includegraphics[width=0.92\textwidth]{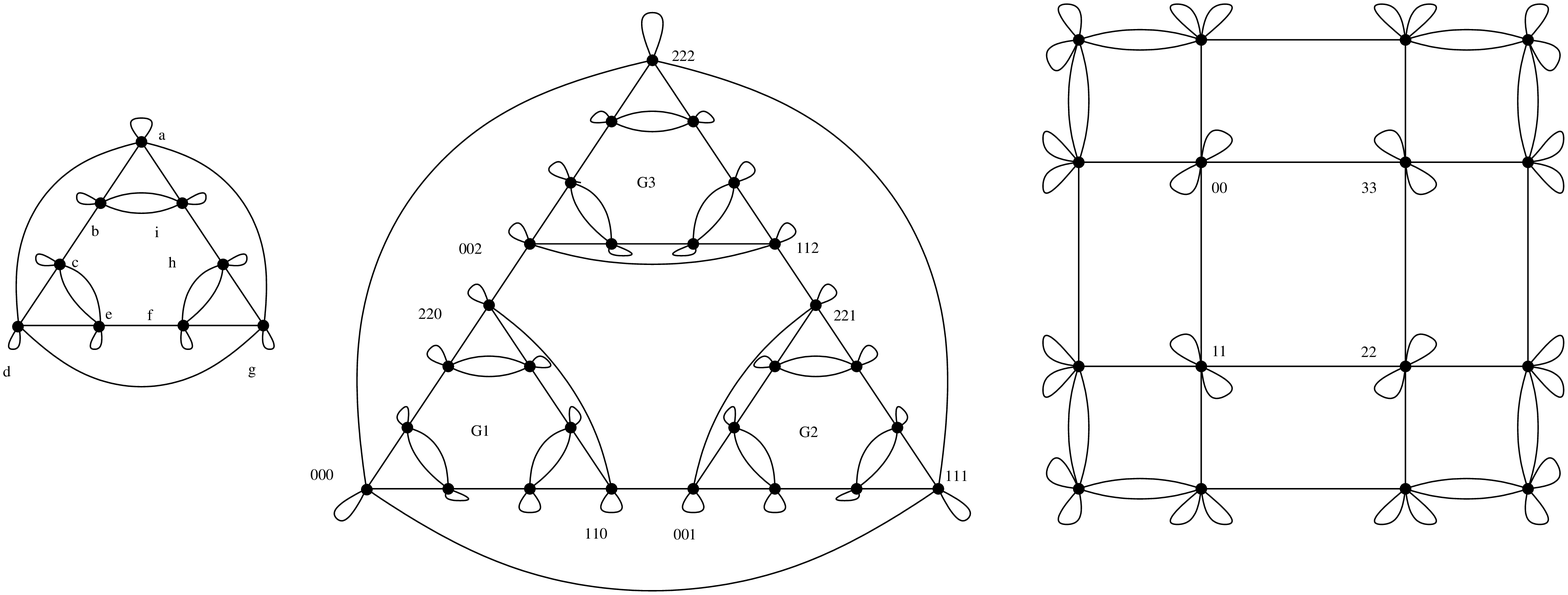}
\end{center}\caption{The graphs $\Gamma_2^{C_3}$, $\Gamma_3^{C_3}$ and $\Gamma_2^{C_4}$.} \label{ciccic}
\end{figure}
\end{center}
\end{example}

\begin{rem}\rm
The previous theorem can be used to show that each isomorphism class of infinite Schreier graphs associated with the action of the group $\mathcal{G}_{C_k}$ contains finitely many graphs. In fact any isomorphism of infinite graphs induces an automorphism of finite graphs  $\Gamma_{n}^{C_k}$ and, since the number of such automorphisms is bounded, one gets the assert. The same phenomenon have been already noticed in \cite{bound} for the Hanoi Towers group.
\end{rem}

\subsection{The case of a path graph $P_k$}\label{paths}
In this subsection, we give a precise description of the diameter and of the automorphism group of the Schreier graph $\Gamma_{n}^{G}$ in the case where $G$ is a path graph.\\
\indent A path $P_k$ on $k$ vertices, that we denote by $\{1,\ldots, k\}$, is a tree with two leaves and $k-2$ vertices of degree $2$ (see Fig. \ref{uu}). We will call extremal the edges containing the two leaves (denoted by $e_1$ and $e_{k-1}$) and internal the other ones. We have already remarked in Example \ref{esempibase} that the group $\mathcal{G}_{P_2}$ is isomorphic to $\mathbb{Z}$ (whose $n$-th Schreier graph is a cyclic graph of length $2^n$) and that the group $\mathcal{G}_{P_3}$ is the so-called Tangled odometer group (whose first four Schreier graphs are drawn in Fig. \ref{primitre} and Fig. \ref{quattro}). \\
\indent Given a graph $G=(V,E)$, we denote by $d(u,v)$ the geodesic distance between $u$ and $v$, that is, the length of a shortest path in $G$ connecting $u$ and $v$. Then the \emph{diameter} of $G$ is defined as
$$
diam (G) = \max\{d(u,v) : u, v \in V\}.
$$
\begin{figure}[h]
\begin{center}
\normalsize
\psfrag{G}{$P_k$}

\footnotesize
\psfrag{0}{$1$}\psfrag{1}{$2$}
\psfrag{2}{$3$}\psfrag{k-2}{$k-1$}
\psfrag{k-1}{$k$}

\psfrag{e1}{$e_1$}\psfrag{e2}{$e_2$}
\psfrag{ek-1}{$e_{k-1}$}
\includegraphics[width=0.7\textwidth]{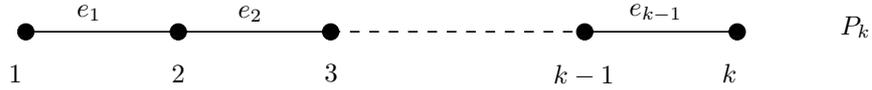}
\end{center}\caption{The path graph $P_k$.} \label{uu}
\end{figure}

First we want to understand the structure of the graphs $\Gamma_{n}^{P_k}$. Fix $k\geq 3$. Since $P_k$ is a tree, all its $k-2$ vertices of degree two are cut-vertices, and so, according to Proposition \ref{cut} they give rise to cut-vertices in $\Gamma_{n}^{P_k}$. The two leaves correspond to loops (see Remark \ref{AA}).
\begin{example}\rm
The Schreier graphs $\Gamma^{P_4}_n$, for $n=1,2,3$ are shown in Fig. \ref{cucu} and Fig. \ref{cucucu}.

\begin{figure}[h]
\begin{center}
\psfrag{0}{$1$}\psfrag{1}{$2$}\psfrag{2}{$3$}\psfrag{3}{$4$}
\tiny \psfrag{11}{$22$}\psfrag{00}{$11$}\psfrag{22}{$33$}\psfrag{33}{$44$}
\psfrag{30}{$41$}\psfrag{03}{$14$}
\includegraphics[width=0.8\textwidth]{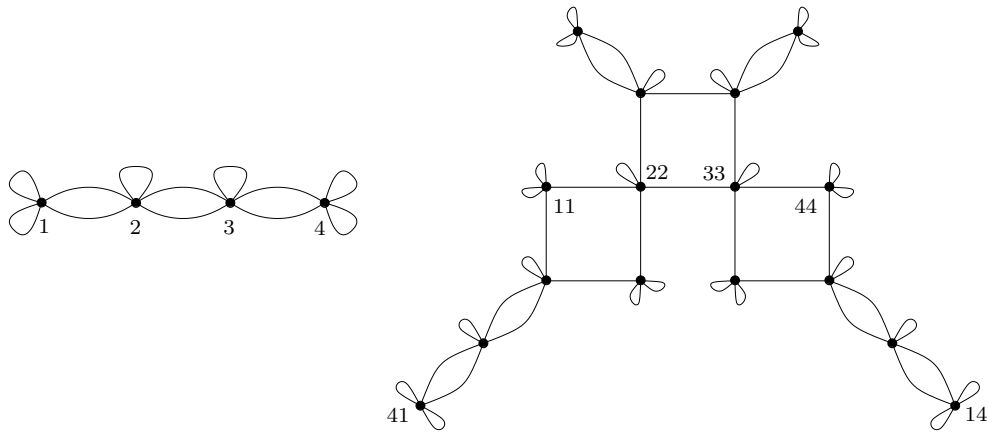}
\end{center}\caption{The Schreier graphs $\Gamma^{P_4}_1$ and $\Gamma^{P_4}_2$.} \label{cucu}
\end{figure}
\newpage

\begin{figure}[h]
\begin{center}
\tiny \psfrag{111}{$222$}\psfrag{000}{$111$}\psfrag{222}{$333$}\psfrag{333}{$444$}  \psfrag{303}{$414$}\psfrag{030}{$141$}
\psfrag{221}{$221$}  \psfrag{334}{$334$}
\includegraphics[width=0.8\textwidth]{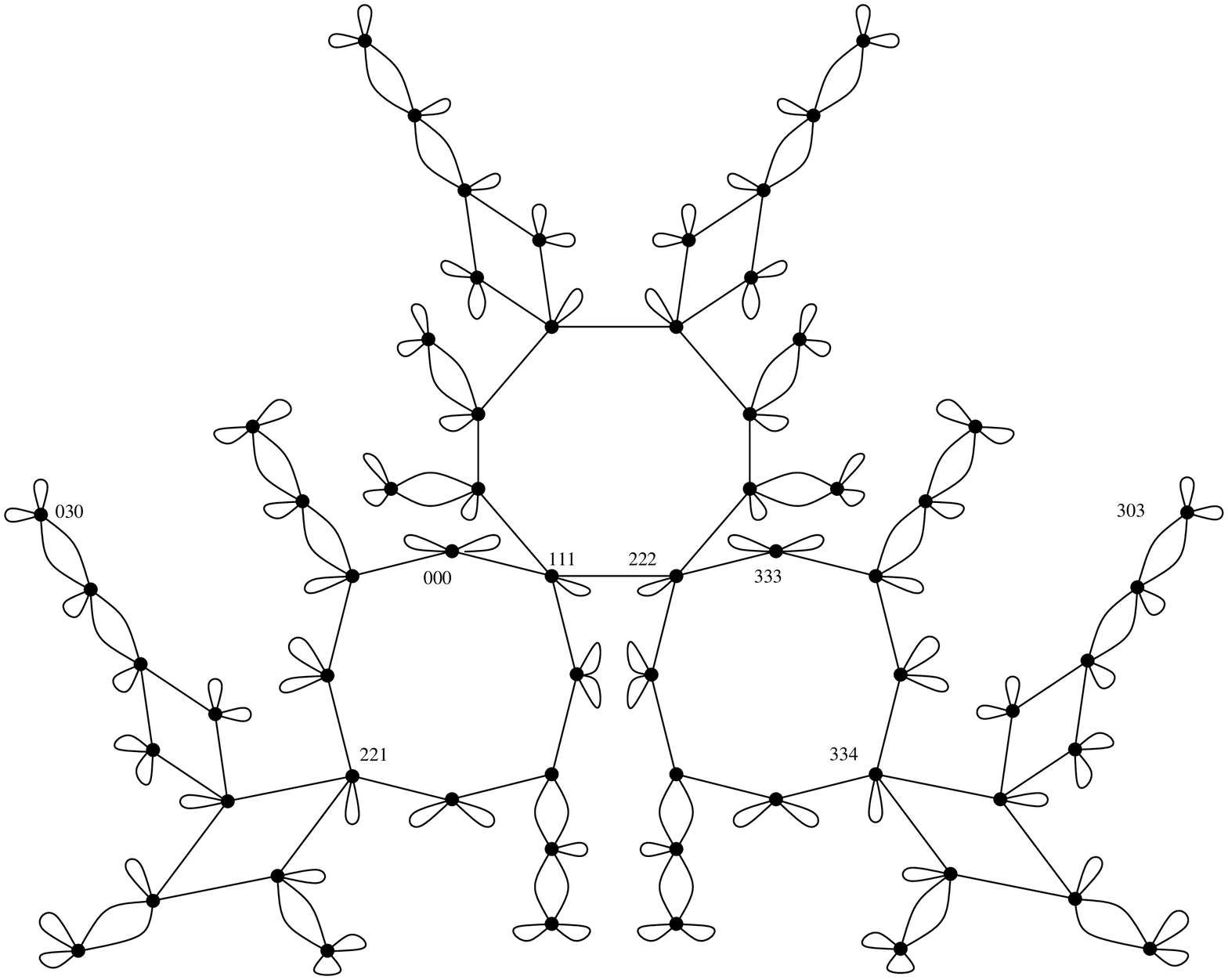}
\end{center}\caption{The Schreier graph $\Gamma^{P_4}_3$.} \label{cucucu}
\end{figure}
Notice that in this case we can observe a \lq\lq wedge\rq\rq shape of the Schreier graphs, contrary to the straight shape of the graphs $\Gamma^{P_3}_n$ shown in Fig. \ref{primitre} and Fig. \ref{quattro}. This property depends on the existence of an internal edge in $P_4$, which does not appear in $P_3$.
\end{example}
Our aim is to describe how one can recursively construct the graph $\Gamma_{n}^{P_k}$ starting from $\Gamma_{n-1}^{P_k}$. Following \cite{PhDBondarenko}, we can proceed as follows.

We take $k$ copies of $\Gamma_{n-1}^{P_k}$ and append to the end of the vertices of the $i$-th copy the letter $i$, for $i=1,\ldots,k$. Then, for each $i=2,\ldots,k-1$, we remove the edges $\{i^n,(i-1)^{n-1}i\}$ and $\{i^n,(i+1)^{n-1}i\}$. We also remove the edges $\{1^{n},2^{n-1}1\}$ and $\{k^n,(k-1)^{n-1}k\}$. Finally, for $i=1,\ldots, k-1$, we join the $i$-th and $(i+1)$-th copies by adding the edges $\{i^{n},(i+ 1)^{n}\}$ and $\{(i+ 1)^{n-1}i,i^{n-1}(i+ 1)\}$. The last operation gives rise to new cycles of doubled length with respect to the level $n-1$.

\begin{example}\rm
In Fig. \ref{figcopie} the construction of $\Gamma_3^{P_4}$ starting from $4$ copies of $\Gamma_2^{P_4}$ is shown. The copies are separated by dotted lines; the deleted edges are represented by dashed lines; the new edges producing cycles of length $8$ are in bold lines.
\begin{figure}[h]
\begin{center}
\tiny \psfrag{111}{$222$}\psfrag{000}{$111$}\psfrag{222}{$333$}\psfrag{333}{$444$}

\psfrag{j}{$332$}\psfrag{k}{$223$}\psfrag{u}{$221$}\psfrag{v}{$112$}  \psfrag{y}{$443$}\psfrag{z}{$334$}
 \psfrag{m}{$141$}\psfrag{n}{$414$}
\scriptsize
\psfrag{c1}{copy $1$}\psfrag{c2}{copy $2$}\psfrag{c3}{copy $3$}\psfrag{c4}{copy $4$}
\includegraphics[width=0.8\textwidth]{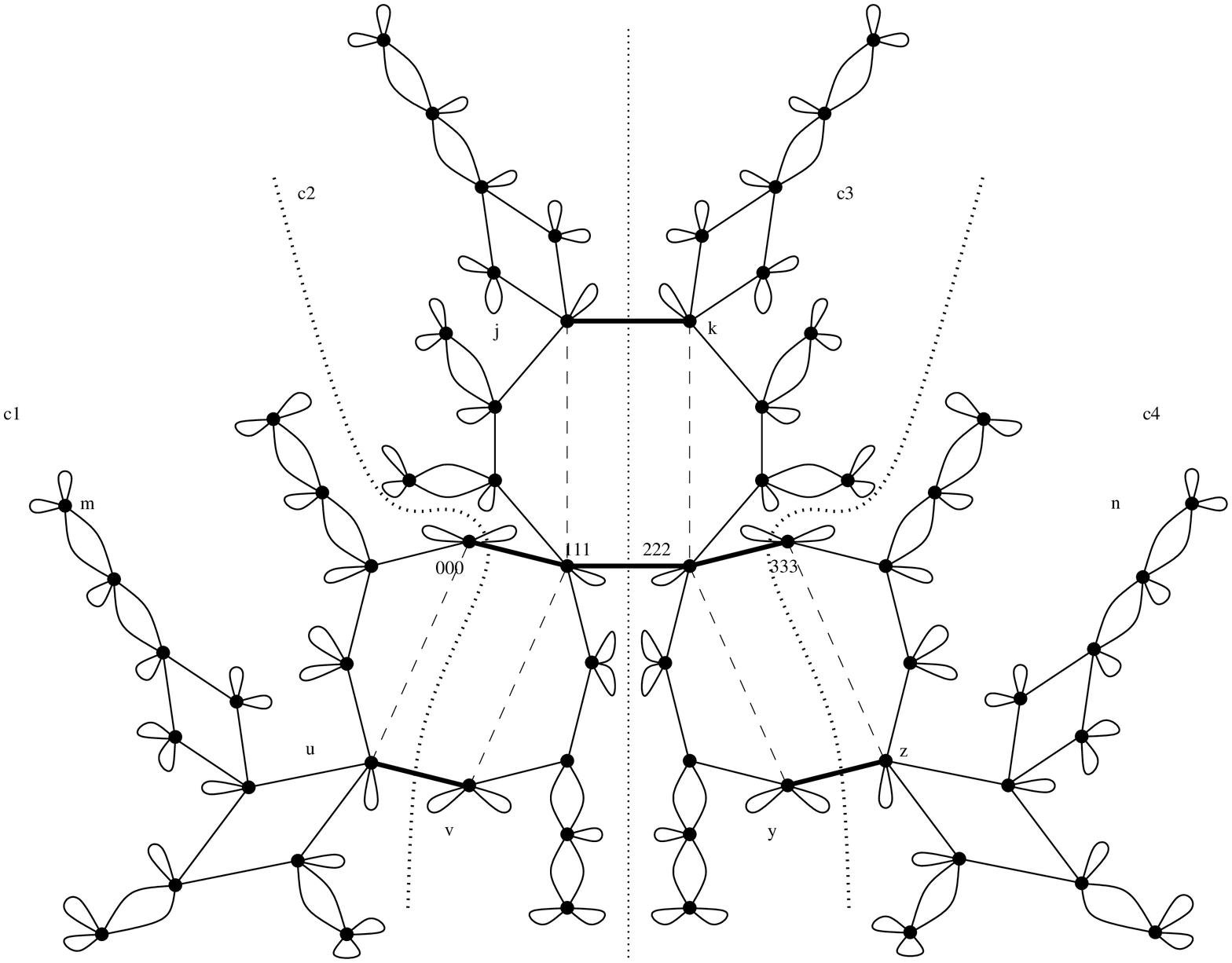}
\end{center}\caption{The construction of $\Gamma^{P_4}_3$ from $\Gamma^{P_4}_2$.} \label{figcopie}
\end{figure}
\end{example}
Recall that each generator $e_i$, $i=1,\ldots, k-1$ corresponds to an edge of $P_k$ connecting two consecutive vertices $i$ and $i+1$ (Fig. \ref{uu}). More effectively, the action of $e_i$ on vertices of types $wxv\in V^n$ with $w\in \{i,i+1\}^k$, $x\neq i,i+1$ and $v$ arbitrary (with $x,v$ possibly empty) gives rise to a cycle of length $2^{k}$, since $e_i$ acts on $\{i,i+1\}^k$ like an Adding machine. For $i=1,\ldots, k-2$, such cycles share vertices of type $(i+1)^kxv$ with cycles corresponding to the action of the generator $e_{i+1}$, which is the only other generator that acts nontrivially on the letter $i+1$. In particular, maximal cycles in $\Gamma_{n}^{P_k}$ are those containing the vertices of type $i^{n}$: for $i=2,\ldots, k-2$, such vertices belong to the two cycles of length $2^n$ and labeled by $e_i$ and $e_{i+1}$, whereas for $i=1$ (resp. $i=k-1$) the vertex $i^n$ belong to the unique maximal cycle labeled by $e_1$ (resp. $e_{k-1}$). It follows that $\Gamma_{n}^{P_k}$ has a cactus structure, where adjacent cycles can be labeled by generators which correspond to incident edges in $P_k$.\\
\indent Let us analyze the behaviour of maximal cycles when constructing $\Gamma_{n}^{P_k}$ from $\Gamma_{n-1}^{P_k}$. Notice that cycles of $\Gamma_{n-1}^{P_k}$ labeled by $e_i$ containing vertices of type $wxv\in V^{n-1}$ with $w\in \{i,i+1\}^k$ and $x\neq i,i+1$ nonempty, also appear in  $\Gamma_{n}^{P_k}$: more precisely, they correspond to cycles containing vertices of type $wxvy\in V^n$ with $w\in \{i,i+1\}^k$, $y=1,\ldots, k$ and $x\neq i,i+1$ nonempty. Maximal cycles in $\Gamma_{n-1}^{P_k}$, which are the cycles containing the vertices $i^{n-1}$, appear in $\Gamma_{n}^{P_k}$. They correspond to nonmaximal cycles containing vertices $i^{n-1}x\in V^n$, for $x\neq i, i+1$ nonempty. For $x=i,i+1$ the generator $e_i$ gives rise to a new bigger (doubled length) unique cycle of length $2^n$ in $\Gamma_{n}^{P_k}$. All these maximal cycles in $\Gamma_{n}^{P_k}$ are connected through the path $1^n, 2^n, \ldots, k^n$ (see, for instance, the path $1^3,2^3,3^3,4^3$ in Fig. \ref{figcopie}).

As explained in Fig. \ref{e1} and Fig. \ref{ei} each new maximal cycle generated by $e_i$ has attached decorations isomorphic to the decorations appended to vertices belonging to the biggest cycle labeled by $e_i$ in $\Gamma_{n-1}^{P_k}$. More precisely, the decorations $D_v$ and $D_w$ corresponding to the vertices  $v=(i+1)^{n-1}i$ and $w=i^{n-1}(i+1)$ of the cycle of length $2^n$ generated by $e_i$ in $\Gamma_{n}^{P_k}$ are isomorphic to the decorations attached to the vertices $(i+1)^{n-1}$ and $i^{n-1}$, respectively, both belonging to the cycle of length $2^{n-1}$ generated by $e_i$ in $\Gamma_{n-1}^{P_k}$.
\begin{figure}[h]
\begin{center}
\normalsize
\psfrag{G}{$\Gamma^{P_k}_{n-1}$}\psfrag{GG}{$\Gamma^{P_k}_n$}

\tiny
\psfrag{e1}{$e_1$}\psfrag{e2}{$e_2$}
\psfrag{a}{$2^{n-2}1$}\psfrag{b}{$2^{n-1}$}
\psfrag{c}{$\ell=2^{n-1}$}\psfrag{d}{$\ell=2^{n-1}$}

\psfrag{e}{$2^{n-1}1$}\psfrag{f}{$2^{n}$}
\psfrag{m}{$2^{n-2}12$}\psfrag{n}{$2^{n-2}11$}\psfrag{h}{$\ell=2^n$}
\includegraphics[width=0.7\textwidth]{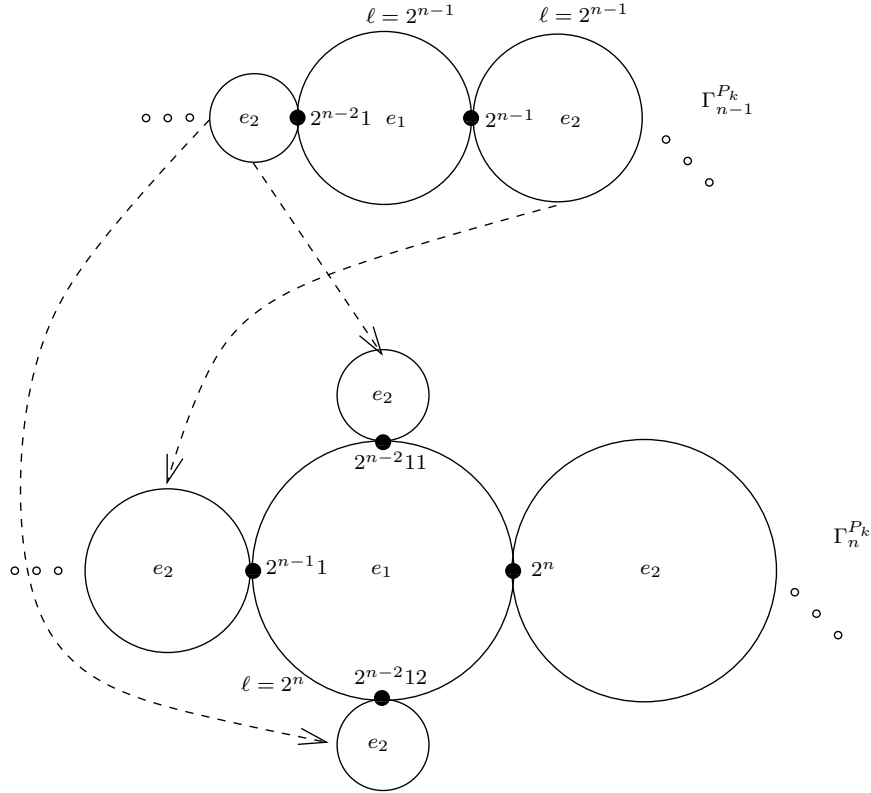}
\end{center}\caption{Change of cycles of extremal edges from level $n-1$ to level $n$.} \label{e1}
\end{figure}
\begin{figure}[h]
\begin{center}
\normalsize
\psfrag{G}{$\Gamma^{P_k}_{n-1}$}\psfrag{GG}{$\Gamma^{P_k}_n$}
  \tiny
\psfrag{e-}{$e_{i-1}$}\psfrag{e}{$e_i$}\psfrag{e+}{$e_{i+1}$}
\psfrag{c}{$\ell=2^{n-1}$}\psfrag{d}{$\ell=2^{n}$}

\psfrag{a}{$i^{n-1}$}\psfrag{b}{$(i+1)^{n-1}$}

\psfrag{n}{$i^{n-1}(i+1)$}\psfrag{m}{$(i+1)^{n-1}i$}
\psfrag{u}{$i^n$}\psfrag{v}{$(i+1)^n$}
\includegraphics[width=0.9\textwidth]{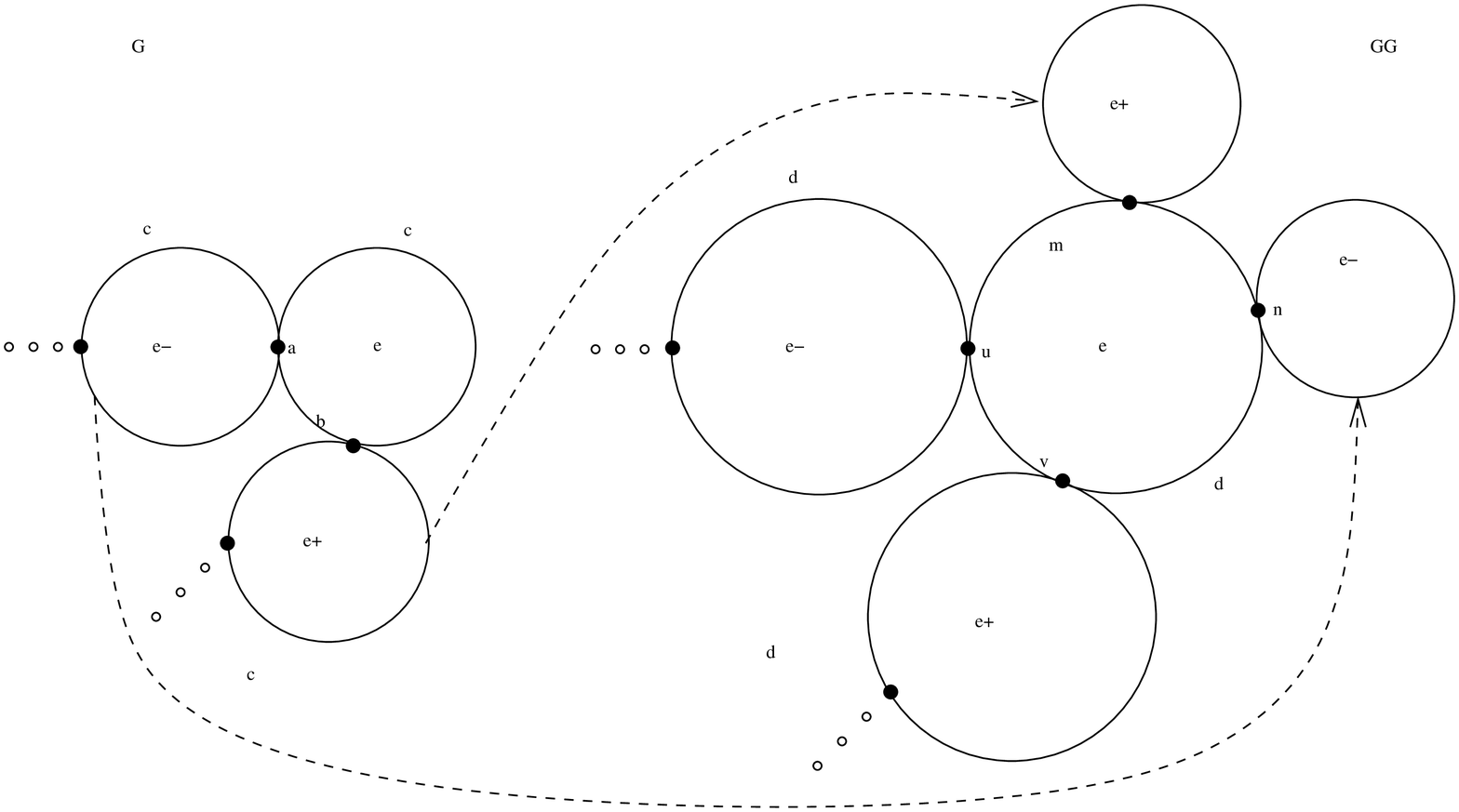}
\end{center}\caption{Change of cycles of internal edges from level $n-1$ to level $n$.} \label{ei}
\end{figure}

It is possible to prove by induction that the diameter of $\Gamma_{n}^{P_k}$ is realized by the distance of the two vertices $a_{k,n}$ and $b_{k,n}$, where
$$
a_{k,n} = \left\{
              \begin{array}{ll}
               (1k)^{(n-1)/2}1  & \hbox{for odd }n \\
                (1k)^{n/2} & \hbox{for even } n
              \end{array}
            \right.  \qquad   b_{k,n} = \left\{
              \begin{array}{ll}
               (k1)^{(n-1)/2}k  & \hbox{for odd } n \\
                (k1)^{n/2} & \hbox{for even } n.
              \end{array}
            \right.
$$
For instance, we have $a_{4,3}=141$ and $b_{4,3}=414$ in Fig. \ref{cucucu}, with $d(a_{4,3},b_{4,3})=19$.\\
\indent In fact, for $n=1$ the statement clearly holds. Suppose now that $n$ is odd (the case $n$ even is similar) and that $a_{k,n-1}=(1k)^{(n-1)/2}$ and $b_{k,n-1}=(k1)^{(n-1)/2}$ realize the diameter in $\Gamma_{n-1}^{P_k}$. Notice that a path from  $a_{k,n-1}$ to  $b_{k,n-1}$ must visit the vertices $2^{n-1}$ and $(k-1)^{n-1}$. In particular $a_{k,n-1}$ is the vertex in $\Gamma_{n-1}^{P_k}$ whose distance from the vertex $2^{n-1}$ is maximal. Analogously $b_{k,n-1}$ is the vertex in $\Gamma_{n-1}^{P_k}$ whose distance from the vertex $(k-1)^{n-1}$ is maximal.

Therefore, passing from $\Gamma_{n-1}^{P_k}$ to $\Gamma_{n}^{P_k}$ we have that $a_{k,n}=a_{k,n-1}1$ is the vertex with maximal distance from $2^{n-1}1$ among the vertices belonging to the copy $1$ of $\Gamma_{n-1}^{P_k}$ within $\Gamma_{n}^{P_k}$; similarly, $b_{k,n}=b_{k,n-1}k$  is the vertex with maximal distance from $(k-1)^{n-1}k$ among the vertices belonging to the copy $k$ of $\Gamma_{n-1}^{P_k}$ within $\Gamma_{n}^{P_k}$.

It follows that the diameter of $\Gamma_{n}^{P_k}$ is realized by a path from $a_{k,n}$ to $b_{k,n}$ given by the sequence of paths
\begin{eqnarray}\label{agostino}
(1k)^{(n-1)/2}1\to_{p_1} 2^{n-1}1\to 2^n\to_{p_\ast} (k-1)^n\to (k-1)^{n-1}k \to_{p_2} (k1)^{(n-1)/2}k.
\end{eqnarray}
Observe that transition $p_\ast$ is the path $2^n, 3^n, \ldots, (k-1)^n$ whose length is $k-3$. Moreover, inside the transitions $p_1$ and $p_2$ there are the transitions
\begin{equation}\label{1}
(k-1)^{n-1}1, \ (k-2)^{n-1}1, \ldots, 3^{n-1}1, \ 2^{n-1}1
\end{equation}
and
\begin{equation}\label{2}
(k-1)^{n-1}k, \ (k-2)^{n-1}k, \ \ldots, 3^{n-1}k, \  2^{n-1}k,
\end{equation}
respectively, which are the analogue at the level $n-1$ of the transition $p_\ast$ described above, so that each of them has length $k-3$: they come from the previous level and are contained in the subgraphs attached at $2^n$ and $(k-1)^n$, respectively. More precisely, the projection of the path $p_1$ on $\Gamma_{n-1}^{P_k}$ consists of a path $p_1'$ from $(1k)^{(n-1)/2}$ to $(k-1)^{n-1}$ followed by the path $p_1''$ given by the projection of \eqref{1}. Similarly, the projection of the path $p_2$ on $\Gamma_{n-1}^{P_k}$ consists of the path $p_2''$  given by the projection of \eqref{2} followed by the path $p_2'$ from $2^{n-1}$ to $(k1)^{(n-1)/2}$.

Denote by $\ell(p)$ the length of the path $p$ and observe that
$$
\ell (p_1') +\ell (p_\ast) +\ell (p_2') = diam (\Gamma_{n-1}^{P_k}).
$$
We are now ready to prove the following theorem.
\begin{theorem}\label{diametro}
Let $P_k$ be the path graph on $k$ vertices, with $k \geq 3$. Then, for every $n\geq 1$:
$$
diam(\Gamma_{n}^{P_k})=2^{n+1}+(k-1)(2n-1)-4n.
$$
\end{theorem}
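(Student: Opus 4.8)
The plan is to turn the geometric picture established just above the statement into a recurrence for $d_n:=\mathrm{diam}(\Gamma_n^{P_k})$ and then solve it in closed form. The base case is immediate: $\Gamma_1^{P_k}=P_k$, so $d_1=k-1$. For $n\ge 2$ I would read off the length of the extremal path in Eq.~\eqref{agostino} segment by segment. The transition $p_\ast$ there is the path $2^n,3^n,\dots,(k-1)^n$, hence has length $k-3$; likewise the two transitions \eqref{1} and \eqref{2}, which sit inside $p_1$ and $p_2$ and are inherited from the previous level, each have length $k-3$. The two remaining transitions $2^{n-1}1\to 2^n$ and $(k-1)^n\to(k-1)^{n-1}k$ run along the new doubled cycle of length $2^n$ produced by $e_1$, respectively $e_{k-1}$, at level $n$, so each has length $2^{n-1}$. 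This last point is the only one needing a direct verification: on the $e_1$-cycle — a single $2^n$-cycle on $\{1,2\}^n$ on which $e_1$ acts as an odometer — the vertices $2^n$ and $2^{n-1}1$ are antipodal, and since $\Gamma_n^{P_k}$ is a cactus, no decoration hanging off that cycle provides a shortcut.

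Summing the five segments of Eq.~\eqref{agostino}, together with $\ell(p_1)=\ell(p_1')+(k-3)$, $\ell(p_2)=(k-3)+\ell(p_2')$, and the identity $\ell(p_1')+\ell(p_\ast)+\ell(p_2')=d_{n-1}$ with $\ell(p_\ast)=k-3$ (so that $\ell(p_1')+\ell(p_2')=d_{n-1}-(k-3)$), I obtain
\[
d_n=\bigl(\ell(p_1')+(k-3)\bigr)+2^{n-1}+(k-3)+2^{n-1}+\bigl((k-3)+\ell(p_2')\bigr)=d_{n-1}+2^{n}+2(k-3),
\]
for all $n\ge 2$. Unfolding this recurrence from $d_1=k-1$ gives
\[
d_n=(k-1)+\sum_{j=2}^{n}\bigl(2^{j}+2(k-3)\bigr)=(k-1)+(2^{n+1}-4)+2(n-1)(k-3),
\]
and an elementary rearrangement rewrites the right-hand side as $2^{n+1}+(k-1)(2n-1)-4n$; a direct substitution confirms the formula also at $n=1$. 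As a consistency check, for $k=4$ and $n=3$ this yields $19$, matching $d(a_{4,3},b_{4,3})$ recorded in the text.

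Concerning difficulties: the bulk of the work — that $a_{k,n}$ and $b_{k,n}$ realize the diameter, the recursive construction of $\Gamma_n^{P_k}$, and the decomposition of the extremal path — is already carried out in the discussion preceding the statement and may be assumed, so what remains is short. The genuinely new ingredient is the identification of the two half-cycle transitions as having length $2^{n-1}$; and the only thing requiring care is the bookkeeping of the three $p_\ast$-type segments (one at level $n$ plus the two copies \eqref{1} and \eqref{2} inherited inside $p_1$ and $p_2$) together with the single application of $\ell(p_1')+\ell(p_2')=d_{n-1}-(k-3)$, so that the recurrence collapses to the clean $d_n=d_{n-1}+2^n+2(k-3)$ with no leftover additive constant. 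Everything after that is the geometric-series evaluation displayed above.
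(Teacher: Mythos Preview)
Your proof is correct and follows essentially the same route as the paper's: you use the decomposition of the extremal path from Eq.~\eqref{agostino}, identify the two half-cycle contributions as $2^{n-1}$ each, combine the remaining pieces via $\ell(p_1')+\ell(p_\ast)+\ell(p_2')=d_{n-1}$ to obtain the recurrence $d_n=d_{n-1}+2^n+2(k-3)$ with $d_1=k-1$, and then solve it. The only cosmetic difference is that the paper packages $d_1+d_3+d_5=\delta_k(n-1)+2(k-3)$ in one step rather than writing out all five segments and subtracting the extra $(k-3)$, but the arithmetic is identical.
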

\begin{proof}
Put $\delta_k(n):=diam(\Gamma_{n}^{P_k})$. As in the preliminary discussion preceding this theorem, we assume that $n$ is odd, the case of an even $n$ being similar. By looking at Eq. \eqref{agostino}, we get $\delta_k(n)=\sum_{i=1}^5 d_i$, where
$$
d_1:=d(a_{k,n}, 2^{n-1}1) = \ell(p_1), \quad  \quad d_2:=d(2^{n-1}1,2^n), \quad  \quad d_3:=d(2^n, (k-1)^n)=\ell(p_\ast)
$$
$$
d_4:=d((k-1)^n,(k-1)^{n-1}k),\qquad  \qquad  d_5:=d((k-1)^{n-1}k, b_{k,n})=\ell(p_2).
$$
From what we said above, we have:
\begin{eqnarray*}
d_1+d_3+d_5 &=& (\ell(p_1')+\ell(p_1'')) + \ell(p_\ast) + (\ell(p_2'') + \ell(p_2'))\\
&=& \delta_k(n-1) + 2(k-3).
\end{eqnarray*}
Since the vertices $2^{n-1}1$ and $2^n$ (resp. $(k-1)^n$ and $(k-1)^{n-1}k$) belong to the maximal cycle generated by $e_1$ (resp. $e_{k-1}$) and they are in opposite positions within this cycle, we have $d_2=d_4=2^{n-1}$. Therefore, we obtain the following recursive description of $\delta_k(n)$:
$$
\left\{
  \begin{array}{ll}
\delta_k(n)=\delta_{k}(n-1)+2(k-3)+2^n\\
\delta_k(1) = k-1.
\end{array}
\right.
$$
A direct computation gives:
\begin{eqnarray*}
\delta_k(n) &=&\delta_k(1) + 2(n-1)(k-3)+\sum_{i=2}^n2^i\\
&=& 2^{n+1}+(k-1)(2n-1)-4n.
\end{eqnarray*}
\end{proof}
In the remaining part of the paper, we give a precise description of the automorphism group of the Schreier graph $\Gamma_{n}^{P_k}$.\\
\indent First of all, notice that a cycle of a given size must be either invariant or moved to another cycle of the same size under the action of an automorphism. Moreover, maximal cycles (of length $2^n$) generated by extremal edges are the only two maximal cycles which are connected to only one cycle of the same size. This implies that they are either invariant or swapped by an automorphism. Let us denote by $A_n$ the set of automorphisms of $\Gamma_{n}^{P_k}$   leaving invariant the two maximal cycles labeled with extremal edges, and by $B_n$ the set of automorphisms of $\Gamma_{n}^{P_k}$  swapping them.

If $\phi\in A_n$, it is possible to prove that actually $\phi$ fixes the vertices in every cycle labeled with internal edges.
The claim easily holds for maximal cycles labeled by internal edges (see for instance the cycle of length $8$ containing the vertices $2^3$ and $3^3$ in Fig. \ref{cucucu}). In fact, they contain a pair of adjacent vertices attached to cycles of the same length and they cannot be swapped; therefore they must be fixed, being invariant cycles with two fixed vertices. As a consequence, $\phi$ acts nontrivially only on the decorations attached to the maximal cycles generated by the action of the internal generators. Then  $\phi$ can be characterized by the automorphisms that it induces on such decorations. Since such decorations already appear in  $\Gamma_{n-1}^{P_k}$ with the same labels and their automorphisms extend to automorphism of $\Gamma_{n-1}^{P_k}$ in $A_{n-1}$, we can conclude by using an inductive argument that all cycles labeled by internal edges in $\Gamma_{n}^{P_k}$ must be fixed by any element  in $A_n$.\\
\indent On the other hand, cycles of length greater or equal to $4$ generated by the action of the extremal generators $e_1$ and $e_{k-1}$ (for instance, the cycles of length $8$ in Fig. \ref{cucucu} containing the vertices $1^3$ and $4^3$) give rise to blocks that have nontrivial symmetries around the vertices $2^tw$, $2^{t-1}1w$ and $(k-1)^tw$, $(k-1)^{t-1}kw$. Such symmetries are generated by the reflection around the axis connecting such vertices (i.e., that keep such vertices fixed). As an example, again in Fig. \ref{cucucu}, one can consider in the leftmost cycle of length $8$ the symmetry around the axis connecting the vertices $222$ and $221$. This implies that each cycle of length greater or equal to $4$ generated by the action of $e_1$ and $e_{k-1}$ gives rise to a nontrivial automorphism of order $2$. Observe that such automorphisms commute, since each one acts nontrivially on a prescribed block and fixes the others. In particular, if $\phi\in A_n$ then it is a composition of these reflections.

Now let us denote by $\psi$ the automorphism of $\Gamma_{n}^{P_k}$ induced, in the sense of Proposition \ref{ind}, by the nontrivial automorphism of $P_k$ switching the vertex $i$ with the vertex $k-i+1$. Clearly $\psi\in B_n$ and $\psi^2=id$. Moreover, for any $\varphi\in B_n$ we have that the composition of $\psi$ with $\varphi$ is in  $A_n$. In particular, every automorphism of $\Gamma_{n}^{P_k}$ is a composition of the aforementioned  reflections in $A_n$ and possibly $\psi$. Moreover, $\psi$ commutes with these reflections.  We are now ready to prove the following result.

\begin{theorem} \label{autogroupfinale}
Let $P_k$ be the path graph on $k$ vertices, with $k\geq 3$. Then, for every $n\geq 2$, the group of automorphisms of $\Gamma_{n}^{P_k}$ is $\mathbb{Z}_2^{\phi_k(n)+1}$, where
$$
\phi_k(n)=\frac{2(k^{n-1}-2k^{n-2}+1)}{k-1}
$$
is the number of cycles of length greater or equal to $4$ in $\Gamma_{n}^{P_k}$ generated by the action of the generators $e_1$ and $e_{k-1}$.
\end{theorem}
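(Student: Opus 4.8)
The plan is to build on the structural analysis carried out in the paragraphs preceding the statement. That discussion shows that every automorphism of $\Gamma_{n}^{P_k}$ is a composition of the order-two reflections $r_C$ attached to the cycles $C$ of length $\geq 4$ generated by the extremal generators $e_1$ and $e_{k-1}$, together with a possible occurrence of the automorphism $\psi$ induced by the flip $i\mapsto k-i+1$ of $P_k$; it also shows that any two of the $r_C$ commute, that $\psi^2=id$, and that $\psi$ commutes with every $r_C$. Consequently $Aut(\Gamma_{n}^{P_k})$ is an elementary abelian $2$-group generated by the $r_C$ together with $\psi$, hence a homomorphic image of $\mathbb{Z}_2^{m+1}$, where $m$ is the number of relevant cycles. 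It therefore suffices to prove two things: (a) these $m+1$ involutions are independent, so that in fact $Aut(\Gamma_{n}^{P_k})\cong\mathbb{Z}_2^{m+1}$; and (b) $m=\phi_k(n)$.

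For (a) I would exploit the cactus structure of $\Gamma_{n}^{P_k}$ described above. Each $r_C$ fixes the two antipodal ``special'' vertices of $C$ (namely $2^{t}w,\,2^{t-1}1w$, or $(k-1)^{t}w,\,(k-1)^{t-1}kw$) and acts as the identity outside $C$ and the decorations hanging at the remaining vertices of $C$; in particular $r_C$ is nontrivial on a cycle $C'$ precisely when $C'=C$ or $C'$ lies inside one of those non-axis decorations. I would observe that ``$C'$ lies inside a non-axis decoration of $C$'' defines a strict partial order on the set of relevant cycles, since the decorations hanging at distinct vertices of a cycle of a cactus are disjoint and downward closed. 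Given a relation $\prod_{C\in S}r_C=id$ with $S$ nonempty, choose $C_0\in S$ maximal for this order: then every $r_C$ with $C\in S\setminus\{C_0\}$ restricts to the identity on $C_0$, so the left-hand side restricts to $C_0$ as $r_{C_0}$, a nontrivial reflection of a cycle of length $\geq 4$ — a contradiction. Finally $\psi\notin A_n=\langle r_C\rangle$, because $\psi$ interchanges the two maximal cycles labelled by $e_1$ and $e_{k-1}$ whereas every $r_C$ fixes each of them pointwise; hence no product of the $r_C$ equals $\psi$, and independence follows, giving $Aut(\Gamma_{n}^{P_k})\cong\langle r_C\rangle\times\langle\psi\rangle\cong\mathbb{Z}_2^{m+1}$.

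For (b), applying $\psi$ shows that the number of cycles of length $\geq 4$ generated by $e_1$ equals the number generated by $e_{k-1}$, so $m=2N$ with $N$ the number of $e_1$-cycles of length $\geq 4$ in $\Gamma_{n}^{P_k}$. Since $e_1$ acts on each maximal $\{1,2\}$-prefix as an adding machine, the $e_1$-orbit of a vertex whose maximal $\{1,2\}$-prefix has length $t$ is a cycle of length $2^{t}$; such a cycle has length $\geq 4$ iff $t\geq 2$, and it is determined by $t\in\{2,\dots,n-1\}$ together with the suffix $xv$ with $x\in\{3,\dots,k\}$ and $|v|=n-t-1$, which gives $(k-2)k^{n-t-1}$ cycles for each such $t$, plus the unique maximal cycle $\{1,2\}^{n}$ (of length $2^{n}\geq 4$ since $n\geq 2$); one checks moreover that an $e_1$-cycle is never an $e_{k-1}$-cycle, so there is no overcounting. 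Summing the geometric series,
$$N=1+(k-2)\sum_{t=2}^{n-1}k^{n-t-1}=1+(k-2)\frac{k^{n-2}-1}{k-1}=\frac{k^{n-1}-2k^{n-2}+1}{k-1},$$
so $m=2N=\phi_k(n)$, which completes the proof.

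The step I expect to be the real work is (a): the preceding discussion identifies the generators and asserts the commutation relations, but turning the informal picture into a rigorous proof that no nontrivial product of these reflections (and $\psi$) collapses to the identity requires precise bookkeeping of which decorations each $r_C$ moves inside the cactus $\Gamma_{n}^{P_k}$, together with a careful check that the ``non-axis decoration'' relation is genuinely a partial order interacting with the $r_C$ as claimed. By contrast, step (b) is a routine summation once one knows that $e_1$ and $e_{k-1}$ act as adding machines on their letter-blocks.
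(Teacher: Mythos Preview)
Your proposal is correct, and in fact more complete than the paper's own proof on one point while taking a different route on the other.

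For part (b), the paper does not count the cycles directly; instead it sets up the recursion $\phi_k(n)=k\,\phi_k(n-1)-2$ with $\phi_k(2)=2$, arguing from the level-to-level construction of $\Gamma_n^{P_k}$: passing from level $n-1$ to level $n$ produces $k$ copies of each old extremal cycle by appending a final letter, except that the two maximal extremal cycles merge in pairs into single longer cycles (so those contribute $k-1$ rather than $k$ copies each). Solving the recursion gives the stated closed form. Your direct enumeration by the length $t$ of the maximal $\{1,2\}$- (resp.\ $\{k-1,k\}$-) prefix is an equally valid and arguably more transparent way to reach the same formula; it bypasses the inductive picture at the cost of not displaying the self-similarity.

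For part (a), the paper does not give a proof at all: after the discussion establishing that the $r_C$ commute, that $\psi$ commutes with them, and that every automorphism is a product of these, it simply asserts the group is $\mathbb{Z}_2^{\phi_k(n)+1}$ without verifying that no nontrivial word in these involutions collapses. Your partial-order argument (choose $C_0\in S$ maximal for ``$C'$ lies in a non-axis decoration of $C$'', then observe that each $r_C$ with $C\in S\setminus\{C_0\}$ fixes $C_0$ pointwise) genuinely fills this gap. The one point that deserves a line of justification is that if $C_0$ sits in the \emph{axis} decoration of some $C$ (the piece hanging at $2^{t-1}1w$ or $(k-1)^{t-1}kw$), then $r_C$ still fixes $C_0$ pointwise; this holds because $r_C$ is by definition the reflection acting trivially on that axis decoration, but it is worth saying explicitly so that the trichotomy ``outside $B_C$ / in a non-axis decoration / in the axis decoration'' is seen to be exhaustive and harmless in the last case.
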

\begin{proof}
It follows from the above discussion that we have to count the number of cycles of length greater or equal to $4$ in $\Gamma_{n}^{P_k}$ generated by the action of $e_1$ and $e_{k-1}$. We proceed by induction in order to prove that such a number coincides with $\phi_k(n)$. For $n=2$, we just have the two cycles of length $4$ with vertex sets $\{11,22, 12, 21\}$ and $\{(k-1)^2, k^2, (k-1)k, k(k-1)\}$, with the nontrivial automorphisms flipping $11, 12$ and $k^2, k(k-1)$. Now, passing from $\Gamma_{n-1}^{P_k}$ to $\Gamma_{n}^{P_k}$, all cycles generated by $e_1$ and $e_{k-1}$ are preserved just appending a final letter in $\{1,\ldots, k\}$ to the vertices, except the maximal cycle in $\Gamma_{n-1}^{P_k}$ containing $2^{n-1}$, and the maximal cycle in $\Gamma_{n-1}^{P_k}$ containing $(k-1)^{n-1}$, since each of them will produce a bigger maximal cycle in $\Gamma_{n}^{P_k}$ (the one containing $2^{n-1}1$ and $2^n$, and the one containing  $(k-1)^{n-1}k$ and $(k-1)^n$), so that each of them gives rise to $k-1$ cycles to be taken into account at level $n$. Therefore, we obtain the following recursive formula for $\phi_k(n)$:
$$
\left\{
  \begin{array}{ll}
\phi_k(n)=k\phi_k(n-1)-2\\
\phi_k(2) = 2.
\end{array}
\right.
$$
A direct computation gives:
\begin{eqnarray*}
\phi_k(n)&=&k^{n-2}\phi_k(2)-2\sum_{i=0}^{n-3} k^i\\
&=& \frac{2(k^{n-1}-2k^{n-2}+1)}{k-1}.
\end{eqnarray*}
The claim follows by adding to the $\phi_k(n)$ automorphisms the one induced, in the sense of Proposition \ref{ind}, by the nontrivial automorphism of $P_k$ switching the vertex $i$ with the vertex $k-i+1$.
\end{proof}

\begin{rem}\rm
Theorem \ref{diametro} and Theorem \ref{autogroupfinale} do not work for $k=2$. In this case, we get the Adding machine (see Case (1) in Example \ref{esempibase}) whose Schreier graphs are cycles. More precisely, the $n$-th Schreier graph $\Gamma_{n}^{P_2}$ is the cyclic graph $C_{2^n}$, whose diameter is $2^{n-1}$ and whose automorphism group is isomorphic to the dihedral group $D_{2^{n+1}}$.
\end{rem}

\end{document}